\title{Filtrations and cohomology III: cohomology of $\bE_\infty$-rings}\author{Benjamin Antieau}\date{\today}
\newcommand{\stackspace}{2.5}
\newcommand{\stack}[2][1cm]{\;\tikz[baseline, yshift=.65ex]%
    {\foreach \k [evaluate=\k as \r using (.5*#2+.5-\k)*\stackspace] in {1,...,#2}{%
    \ifodd\k{\draw[->](0,\r pt)--(#1,\r pt);}%
    \else{\draw[<-](0,\r pt)--(#1,\r pt);}\fi
    }}\;}
\DeclareSymbolFontAlphabet{\mathbb}{AMSb} 
\DeclareSymbolFontAlphabet{\mathbbl}{bbold}
\newcommand{\Prism}{{\mathbbl{\Delta}}}
\newcommand{\Inf}{{\mathbbl{\Pi}}}
\newcommand{\Infhat}{\widehat{\Inf}}
\newcommand{\InfEoo}{\Inf^{\bE_\infty}}
\newcommand{\InfhatEoo}{\Infhat^{\bE_\infty}}
\newcommand{\InfEn}{\Inf^{\bE_n}}
\newcommand{\InfhatEn}{\Infhat^{\bE_n}}
\definecolor{todo}{rgb}{1,0,0}
\definecolor{conditional}{rgb}{0,1,0}
\definecolor{e-mail}{rgb}{0,.40,.80}
\definecolor{reference}{rgb}{.20,.60,.22}
\definecolor{mrnumber}{rgb}{.80,.40,0}
\definecolor{citation}{rgb}{0,.40,.80}
\let\oldmarginpar\marginpar
\renewcommand\marginpar[1]{\-\oldmarginpar[\raggedleft\footnotesize #1]%
{\raggedright\footnotesize #1}}
\newcommand{\Oscr}{\mathcal{O}}
\newcommand{\B}{\mathrm{B}}
\renewcommand{\d}{\mathrm{d}}
\newcommand{\D}{\mathrm{D}}
\newcommand{\E}{\mathrm{E}}
\newcommand{\F}{\mathrm{F}}
\newcommand{\G}{\mathrm{G}}
\renewcommand{\H}{\mathrm{H}}
\renewcommand{\L}{\mathrm{L}}
\newcommand{\N}{\mathrm{N}}
\renewcommand{\r}{\mathrm{r}}
\renewcommand{\S}{\mathrm{S}}
\newcommand{\bD}{\mathbf{D}}
\newcommand{\bE}{\mathbf{E}}
\newcommand{\bF}{\mathbf{F}}
\newcommand{\bQ}{\mathbf{Q}}
\newcommand{\bS}{\mathbf{S}}
\newcommand{\bT}{\mathbf{T}}
\newcommand{\bZ}{\mathbf{Z}}
\newcommand{\op}{\mathrm{op}}
\newcommand{\fib}{\mathrm{fib}}
\newcommand{\Mod}{\mathrm{Mod}}
\newcommand{\LMod}{\mathrm{LMod}}
\newcommand{\FMod}{\mathrm{FMod}}
\newcommand{\GrMod}{\mathrm{GrMod}}
\newcommand{\Ind}{\mathrm{Ind}}
\newcommand{\Alg}{\mathrm{Alg}}
\newcommand{\CAlg}{\mathrm{CAlg}}
\newcommand{\cMod}{\mathrm{cMod}}
\newcommand{\FCAlg}{\mathrm{FCAlg}}
\newcommand{\Gr}{\mathrm{Gr}}
\newcommand{\gr}{\mathrm{gr}}
\newcommand{\ins}{\mathrm{ins}}
\newcommand{\fil}{\mathrm{fil}}
\newcommand{\Sym}{\mathrm{Sym}}
\newcommand{\mot}{\mathrm{mot}}
\newcommand{\red}{\mathrm{red}}
\newcommand{\aug}{\mathrm{aug}}
\newcommand{\Fr}{\mathrm{Fr}}
\newcommand{\Pairs}{\mathrm{Pairs}}
\newcommand{\Dec}{\mathrm{Dec}}
\newcommand{\HKR}{\mathrm{HKR}}
\renewcommand{\geq}{\geqslant}
\newcommand{\THH}{\mathrm{THH}}
\newcommand{\HH}{\mathrm{HH}}
\newcommand{\dR}{\mathrm{dR}}
\newcommand{\dRhat}{\widehat{\dR}}
\newcommand{\GM}{\mathrm{GM}}
\newcommand{\Fun}{\mathrm{Fun}}
\DeclareMathOperator*{\colim}{colim}
\DeclareMathOperator*{\Tot}{Tot}
\DeclareMathOperator{\Spec}{Spec}
\newcommand{\we}{\simeq}
\newcommand{\iso}{\cong}
\theoremstyle{plain}
\newtheorem{theorem}{Theorem}[section]
\newtheorem*{theorem*}{Theorem}
\newtheorem{lemma}[theorem]{Lemma}
\newtheorem{proposition}[theorem]{Proposition}
\newtheorem{corollary}[theorem]{Corollary}
\newtheorem*{corollary*}{Corollary}
\theoremstyle{plain}
\theoremstyle{definition}
\newtheoremstyle{named}{}{}{\itshape}{}{\bfseries}{.}{.5em}{#1 \thmnote{#3}}
\theoremstyle{named}
\theoremstyle{definition}
\newtheorem{definition}[theorem]{Definition}
\newtheorem{warning}[theorem]{Warning}
\newtheorem{example}[theorem]{Example}
\newtheorem*{example*}{Example}
\newtheorem*{question*}{Question}
\newtheorem{construction}[theorem]{Construction}
\newtheorem{remark}[theorem]{Remark}
\newtheorem{motto}[theorem]{Motto}
   \def\MR#1{}
\begin{document}

\maketitle
\begin{abstract}
    \noindent
    We discuss filtrations arising from de Rham-type cohomology theories for $\bE_\infty$-rings and
    $\bE_n$-rings. Examples include the HKR filtration on relative topological Hochschild homology, the Hodge
    filtration on $\bE_\infty$-infinitesimal cohomology, and the Hodge filtration on $\bE_\infty$-de Rham cohomology. 
\end{abstract}

\renewcommand{\baselinestretch}{0.75}\small
\tableofcontents
\renewcommand{\baselinestretch}{1.0}\normalsize

\section{Introduction}

We give constructions, via universal properties, of certain multiplicative filtrations associated to
maps of $\bE_\infty$-ring spectra (or $\bE_n$-ring spectra) and explain how these are related to spectral sequences and
filtrations previously studied in the literature. The final section gives a connection to
Goodwillie calculus. Most of the main results have proofs identical to those in~\cite{antieau_crystallization,antieau_gm}, so we omit
them. Closely related ideas were developed several years ago by Tyler Lawson
and explained by him at a conference in Boulder; see the video~\cite{lawson-video}.

\paragraph{Notation.}
We adopt all of the notation and terminology from the
prequels~\cite{antieau_crystallization, antieau_gm}.
An undecorated module category such as $\Mod_k$ always denotes the $\infty$-category of left
$k$-modules, often written $\LMod_k$.
If $k$ is an $\bE_\infty$-ring, let $\CAlg_k$ be the $\infty$-category of $\bE_\infty$-$k$-algebras
and let $\FCAlg_k$ be the $\infty$-category of filtered $\bE_\infty$-$k$-algebras.
An object of $\FCAlg_k$ is sometimes called a decreasing $\bE_\infty$-filtration or a decreasing
multiplicative filtration.
Let $\widehat{\FCAlg}_k$ be the $\infty$-category of complete filtered $\bE_\infty$-$k$-algebras.
If $k=\bS$ is the sphere spectrum, we simply write these as $\CAlg$, $\FCAlg$, and
$\widehat{\FCAlg}$. Often, we view an $\bE_\infty$-ring $R$ as a filtered $\bE_\infty$-ring
without comment; in these cases, we implicitly mean the filtered $\bE_\infty$-ring $\ins^0R$
corresponding to the filtration $$\cdots\rightarrow 0\rightarrow R=R=R=\cdots,$$ where the first non-zero term
is in filtration weight zero.

\paragraph{Acknowledgments.}
We thank Bhargav Bhatt, Sanath Devalapurkar, Bjørn Dundas, Elden Elmanto, John Francis, Paul Goerss, Mike Hill,
Achim Krause, Tyler Lawson, Deven Manam, Akhil Mathew, Thomas Nikolaus, Arpon Raksit, Birgit Richter, and Nick Rozenblyum for helpful conversations.
Special thanks to Bjørn Dundas, Tyler Lawson, and Birgit Richter for exceptionally helpful comments
on the first draft.

This project was supported by NSF grant DMS-2152235 and the Simons Collaboration on Perfection.
We also thank AIM for its hospitality: we learned of the work of Hill--Lawson discussed in Example~\ref{ex:hill_lawson} from Michael
Hill there in October 2025.

\section{Infinitesimal cohomology}

Our basic notion is the $\bE_\infty$-analogue of infinitesimal cohomology introduced
in~\cite{antieau_crystallization}.
The definition is motivated by Raksit's definition of derived de Rham cohomology
in~\cite{raksit}.

\begin{construction}[$\bE_\infty$-infinitesimal cohomology]
    Let $k$ be an $\bE_\infty$-ring and let $\FCAlg_k$ be the $\infty$-category of filtered
    $\bE_\infty$-algebras over $k$. The functor $\gr^0\colon\FCAlg_k\rightarrow\CAlg_k$ preserves
    limits and colimits, so is the right adjoint of an adjunction
    $$\F^\star_\H\InfEoo_{-/k}\colon\CAlg_k\rightleftarrows\FCAlg_k\colon\gr^0,$$
    where $\F^\star_\H\InfEoo_{-/k}$ is the left adjoint, by definition.
    Given $R\in\CAlg_k$, we call $\InfEoo_{R/k}$ the $\bE_\infty$-infinitesimal cohomology of $R$
    relative to $k$. By definition, it comes equipped with a filtration $\F^\star_\H\InfEoo_{R/k}$ called the 
    Hodge filtration. We let $\InfhatEoo_{R/k}$ be the completion of $\InfEoo_{R/k}$ with respect
    to the Hodge filtration and let $\F^\star_\H\InfhatEoo_{R/k}$ denote the induced Hodge
    filtration on the completion.
    There is an adjunction
    $$\F^\star_\H\InfhatEoo_{-/k}\colon\CAlg_k\rightleftarrows\widehat{\FCAlg}_k\colon\gr^0$$
    giving the universal property of the completed form of $\bE_\infty$-infinitesimal cohomology.
\end{construction}

\begin{lemma}\label{lem:graded}
    There is a natural equivalence
    $$\Sym^*_R(\L_{R/k}^{\bE_\infty}[-1])\rightarrow\gr^*_\H\InfEoo_{R/k}$$
    of graded $\bE_\infty$-$R$-algebras.
\end{lemma}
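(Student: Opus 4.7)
The plan is to identify both sides as functors from $\CAlg_k$ to the $\infty$-category of graded $\bE_\infty$-$k$-algebras that preserve sifted colimits in $R$, construct a natural transformation between them via a square-zero extension argument, and verify it is an equivalence on free $\bE_\infty$-$k$-algebras, which generate $\CAlg_k$ under sifted colimits.

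For the construction, I first need that the unit $R \to \gr^0_\H \InfEoo_{R/k}$ of the defining adjunction is an equivalence---this is the weight-$0$ assertion of the lemma and can be reduced to the case of free $\bE_\infty$-$k$-algebras in the same manner as below. Granting this and using that the Hodge filtration is multiplicative, the two-step quotient $\InfEoo_{R/k}/\F^2_\H \InfEoo_{R/k}$ sits in a cofibre sequence
$$\gr^1_\H \InfEoo_{R/k} \rightarrow \InfEoo_{R/k}/\F^2_\H \InfEoo_{R/k} \rightarrow R$$
in $\CAlg_k$ which exhibits it as a square-zero extension of $R$ by the $R$-module $\gr^1_\H \InfEoo_{R/k}$. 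By the standard classification of square-zero extensions of $\bE_\infty$-rings, this is the datum of a derivation $\L_{R/k}^{\bE_\infty} \to \gr^1_\H \InfEoo_{R/k}[1]$, or equivalently a map $\L_{R/k}^{\bE_\infty}[-1] \to \gr^1_\H \InfEoo_{R/k}$ in $\Mod_R$. By the universal property of $\Sym^*_R$ as left adjoint to the weight-$1$ forgetful functor from graded $\bE_\infty$-$R$-algebras to $\Mod_R$, this extends uniquely to the claimed map
$$\Sym^*_R(\L_{R/k}^{\bE_\infty}[-1]) \rightarrow \gr^*_\H \InfEoo_{R/k},$$
natural in $R$.

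To check this is an equivalence, observe that both sides, as functors of $R$, preserve sifted colimits: the right-hand side because $\F^\star_\H \InfEoo_{-/k}$ is a left adjoint and $\gr^*$ preserves sifted colimits out of $\FCAlg_k$; the left-hand side because $R \mapsto \L_{R/k}^{\bE_\infty}$ and $\Sym^*$ preserve sifted colimits. It therefore suffices to verify the equivalence when $R = k\{V\}$ is a free $\bE_\infty$-$k$-algebra on a perfect $k$-module $V$, since these objects generate $\CAlg_k$ under sifted colimits. The left-hand side is then $k\{V\} \otimes_k \Sym^*_k(V[-1])$, using $\L_{k\{V\}/k}^{\bE_\infty} \simeq k\{V\} \otimes_k V$.

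The main obstacle is the direct computation of $\gr^*_\H \InfEoo_{k\{V\}/k}$. My plan is to unwind the universal property of $\F^\star_\H \InfEoo_{k\{V\}/k}$, which corepresents the functor $X^\star \mapsto \Map_{\Mod_k}(V, X^0/X^1)$ on $\FCAlg_k$, and identify the resulting filtered $\bE_\infty$-$k$-algebra as a free object on the data of a generator $V$ in weight $0$ together with its boundary class $V[-1]$ in weight $1$ arising from the fibre sequence $X^1 \to X^0 \to X^0/X^1$. The associated graded is then manifestly $k\{V\} \otimes_k \Sym^*_k(V[-1])$ and matches the natural map built above. Passing back to sifted colimits finishes the lemma, in close parallel with the argument for ordinary infinitesimal cohomology in \cite{antieau_crystallization}.
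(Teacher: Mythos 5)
Your argument is correct and is essentially the one the paper relies on: the paper itself only cites F\&C.I.8.13 and \cite[Thm.~5.3.6]{raksit}, but the proof of the parallel $\bE_n$ statement in Section~\ref{sec:en} proceeds exactly as you do --- identify the two-step truncation $\F^{[0,1]}_\H\InfEoo_{R/k}$ as a (universal) square-zero extension to obtain $\gr^1_\H\InfEoo_{R/k}\we\L^{\bE_\infty}_{R/k}[-1]$, extend freely to a natural comparison map out of $\Sym^*_R(\L^{\bE_\infty}_{R/k}[-1])$, and verify it on free $\bE_\infty$-algebras, to which the general case reduces by sifted-colimit preservation of both sides. The only point worth tightening is your appeal to ``the standard classification of square-zero extensions'': rather than asserting that the quotient by $\F^2_\H$ happens to be square-zero and then extracting a derivation, it is cleaner (and is what the cited proofs do) to note that the restriction of the adjunction to $[0,1]$-truncated filtrations exhibits $\F^{[0,1]}_\H\InfEoo_{R/k}$ as the \emph{universal} square-zero extension of $R$, which gives the identification of $\gr^1$ directly.
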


\begin{proof}
    See~F\&C.I.8.13 or~\cite[Thm.~5.3.6]{raksit}.
\end{proof}

\begin{construction}[Hodge spectral sequence]
    Given a filtered spectrum $\F^\star M$ the boundary maps
    $\delta\colon\gr^iM\rightarrow\gr^{i+1}M[1]$ assemble
    into a kind of coherent cochain complex
    $$\cdots\rightarrow\gr^{-1}M[-1]\rightarrow\gr^0M\rightarrow\gr^1M[1]\rightarrow\cdots.$$ For
    example, an easy exercise implies that $\delta^2\we 0$. In
    fact, as shown by Ariotta~\cite{ariotta}, complete filtered spectra are equivalent to coherent
    cochain complexes in spectra.\footnote{This equivalence can also be seen as a form of Koszul
    duality and as such appears in~\cite[Sec.~3.2]{raksit}.} Under the equivalence between complete filtrations and coherent cochain complexes,
    $\F^\star_\H\InfhatEoo_{R/k}$ corresponds to a coherent cochain complex of the form
    $$R\rightarrow\L_{R/k}^{\bE_\infty}\rightarrow\Sym^2_R(\L_{R/k}^{\bE_\infty}[-1])[2]\rightarrow\Sym^3_R(\L_{R/k}^{\bE_\infty}[-1])[3]\rightarrow\cdots.$$
    The associated Hodge spectral sequence takes the form
    $$\E_1^{s,t}=\H^{s+t}(\Sym^s_R(\L_{R/k}^{\bE_\infty}[-1]))\Rightarrow\H^{s+t}(\InfhatEoo_{R/k}),$$
    where for any spectrum $M$ we set $\H^i(M)=\pi_{-i}(M)$.
\end{construction}

The following result is easy, but important.

\begin{theorem}[Poincar\'e lemma]\label{lem:intro_poincare}
    For any map $k\rightarrow R$ of $\bE_\infty$-rings, the natural map $k\rightarrow\InfEoo_{R/k}$
    is an equivalence.
\end{theorem}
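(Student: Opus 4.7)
The plan is to show that the underlying $\bE_\infty$-$k$-algebra $\InfEoo_{R/k}$ corepresents the terminal functor $A\mapsto\ast$ on $\CAlg_k$, which forces $\InfEoo_{R/k}$ to be the initial object $k$. This will be accomplished by combining two adjunctions: the defining one $\F^\star_\H\InfEoo_{-/k}\dashv\gr^0$ from the construction, and the adjunction
$$\colim\colon\FCAlg_k\rightleftarrows\CAlg_k\colon\mathrm{const},$$
where $\colim$ sends a filtered $\bE_\infty$-$k$-algebra to its colimit along the filtration (which for $\F^\star_\H\InfEoo_{R/k}$ coincides with $\InfEoo_{R/k}$, since the filtration is in non-negative weights) and $\mathrm{const}(A)$ is the filtered $\bE_\infty$-$k$-algebra with $A$ in every weight and identity transition maps.

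The first step is to build this second adjunction at the level of $\bE_\infty$-algebras. On filtered spectra the adjunction $\colim\dashv\mathrm{const}$ is the standard colimit-constant-diagram adjunction. To promote it to $\bE_\infty$-algebras one needs $\colim$ to be symmetric monoidal with respect to Day convolution; this is a formal consequence of the definition of Day convolution, since $\colim_J(F\otimes_{\mathrm{Day}}G)\simeq(\colim_J F)\otimes(\colim_J G)$ when $J$ is a symmetric monoidal diagram shape. Then $\mathrm{const}$ is automatically lax symmetric monoidal and lifts to $\bE_\infty$-algebras.

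Chaining the two adjunctions, for any $A\in\CAlg_k$ I would compute
\begin{align*}
\Map_{\CAlg_k}(\InfEoo_{R/k},A) &\simeq \Map_{\FCAlg_k}(\F^\star_\H\InfEoo_{R/k},\mathrm{const}(A)) \\
&\simeq \Map_{\CAlg_k}(R,\gr^0\mathrm{const}(A)).
\end{align*}
Since every transition map in $\mathrm{const}(A)$ is the identity, $\gr^s\mathrm{const}(A)\simeq 0$ for all $s$; in particular $\gr^0\mathrm{const}(A)\simeq 0$. The zero ring is terminal in $\CAlg_k$, so $\Map_{\CAlg_k}(R,0)\simeq\ast$, and therefore $\InfEoo_{R/k}$ corepresents the terminal functor. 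Hence $\InfEoo_{R/k}\simeq k$, and the unit map $k\to\InfEoo_{R/k}$ is the claimed equivalence.

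The main point requiring care is the first step, namely that the $\colim\dashv\mathrm{const}$ adjunction lifts compatibly to the level of $\bE_\infty$-algebras; everything else is a formal unwinding of universal properties. I also note that this argument is morally the same as the one used for classical $\bE_\infty$-de Rham triviality: the universal filtered $\bE_\infty$-algebra with prescribed $\gr^0$ has vanishing underlying algebra because $\mathrm{const}$ kills all graded pieces.
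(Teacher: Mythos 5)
Your proof is correct and is essentially the paper's own argument: the paper also composes the defining adjunction with $\colim\dashv\mathrm{const}$ on $\FCAlg_k$ and uses that $\gr^0\circ\mathrm{const}\simeq 0$ lands in the terminal object, concluding that the composite of left adjoints is constant at the initial object $k$. You simply unwind the same fact at the level of mapping spaces via corepresentability, which is an equivalent phrasing.
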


\begin{proof}
    See F\&C.I.9.6.
    Taking the colimit gives a left adjoint functor
    $\colim\colon\FCAlg_k\rightarrow\CAlg_k$. It admits a right adjoint given by the constant filtration
    functor, which takes an $\bE_\infty$-algebra $R$ over $k$ to the constant $\bZ^\op$-indexed
    filtration $$\cdots =R=R=R=\cdots.$$
    The composition of the right adjoints is the constant functor $\CAlg_k\leftarrow\CAlg_k$ on the
    final object of $\CAlg_k$, the zero ring. It follows that the composition of the left adjoints
    is the constant functor on the initial object, i.e., on $k$.
\end{proof}

\begin{definition}
    Let $\Pairs(\CAlg)$ be the $\infty$-category $\CAlg^{\Delta^1}$ consisting of maps
    $k\rightarrow R$ of $\bE_\infty$-rings. This is a symmetric monoidal $\infty$-category with the
    coCartesian symmetric monoidal structure.
\end{definition}

\begin{proposition}[Base change properties]\label{prop:base_change}
    The functors \begin{align*}
        \F^\star_\H\InfEoo_{-/-}\colon\Pairs(\CAlg)&\rightarrow\FCAlg,\\
        \F^\star_\H\InfhatEoo_{-/-}\colon\Pairs(\CAlg)&\rightarrow\widehat{\FCAlg}
    \end{align*}
    admit symmetric monoidal structures.
\end{proposition}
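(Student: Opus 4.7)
My plan is to identify $\F^\star_\H\InfEoo_{-/-}$ as a left adjoint and invoke the principle that a colimit-preserving functor between coCartesian symmetric monoidal $\infty$-categories canonically lifts to a symmetric monoidal functor.

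First, I would globalize the parametrized adjunctions $\F^\star_\H\InfEoo_{-/k}\dashv\gr^0$ from the Construction into an absolute adjunction
\[\F^\star_\H\InfEoo_{-/-}\colon\Pairs(\CAlg)\rightleftarrows\FCAlg\colon G,\]
where $G$ sends $\F^\star A$ to the pair $\F^0 A\to\gr^0\F^\star A$ given by the canonical quotient. One verifies this by identifying $\Map_{\Pairs(\CAlg)}((k\to R),G(\F^\star A))$ with the pullback $\Map_{\CAlg}(k,\F^0 A)\times_{\Map_{\CAlg}(k,\gr^0\F^\star A)}\Map_{\CAlg}(R,\gr^0\F^\star A)$ and matching it with $\Map_{\FCAlg}(\F^\star_\H\InfEoo_{R/k},\F^\star A)$, fibering the relative mapping space $\Map_{\FCAlg_k}(\F^\star_\H\InfEoo_{R/k},\F^\star A)=\Map_{\CAlg_k}(R,\gr^0\F^\star A)$ over the space $\Map_{\CAlg}(k,\F^0 A)$ of $\ins^0 k$-algebra structures on $\F^\star A$. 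As a consistency check, Theorem~\ref{lem:intro_poincare} and Lemma~\ref{lem:graded} together give $G\circ\F^\star_\H\InfEoo_{-/-}\simeq\id$, so the left adjoint is fully faithful.

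Next, I would observe that both categories carry coCartesian symmetric monoidal structures: $\Pairs(\CAlg)=\CAlg^{\Delta^1}$ has pointwise coproducts $(k\to R)\sqcup(\ell\to S)=(k\otimes\ell\to R\otimes S)$, and $\FCAlg$, as $\bE_\infty$-algebras in filtered spectra, has coproduct equal to the Day-convolution tensor product. Since any left adjoint preserves coproducts, $\F^\star_\H\InfEoo_{-/-}$ sends the initial object $(\bS\to\bS)$ to $\ins^0\bS$ and binary coproducts to tensor products, producing the base-change formula $\F^\star_\H\InfEoo_{R\otimes S/k\otimes\ell}\simeq\F^\star_\H\InfEoo_{R/k}\otimes\F^\star_\H\InfEoo_{S/\ell}$. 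The standard principle that a coproduct-preserving functor between coCartesian symmetric monoidal $\infty$-categories canonically upgrades to a symmetric monoidal functor then supplies the enhancement. For the completed version, I would compose with the reflective localization $\widehat{(-)}\colon\FCAlg\to\widehat{\FCAlg}$; since $\widehat{\FCAlg}$ inherits a coCartesian symmetric monoidal structure as a symmetric monoidal Bousfield localization, the composite left adjoint $\F^\star_\H\InfhatEoo_{-/-}$ is symmetric monoidal by the same argument.

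The main obstacle is the bookkeeping around coCartesian structures: verifying that the coproduct of $\bE_\infty$-algebras in filtered spectra genuinely coincides with the Day-convolution tensor product, so that the general lifting principle applies; and, for the completed version, checking that completion is a symmetric monoidal reflection compatible with the coCartesian structure inherited by $\widehat{\FCAlg}$. Both are standard but warrant careful verification, and everything else in the argument is essentially formal once they are in place.
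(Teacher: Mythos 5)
Your argument is correct and complete: the paper itself only cites the prequel (F\&C.II.2.6) for this statement, and the standard route taken there is exactly the one you give, namely exhibiting $\F^\star_\H\InfEoo_{-/-}$ as a left adjoint between coCartesian symmetric monoidal $\infty$-categories (using that coproducts of $\bE_\infty$-algebras in $\FMod$ are Day-convolution tensor products) and invoking the canonical symmetric monoidal enhancement of finite-coproduct-preserving functors, with the completed case following by composing with the symmetric monoidal localization. Your identification of the global right adjoint $G(\F^\star A)=(\F^0A\to\gr^0A)$ and the pullback description of mapping spaces in $\CAlg^{\Delta^1}$ is the right way to globalize the parametrized adjunctions, so no gaps remain.
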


\begin{proof}
    See~F\&C.II.2.6.
\end{proof}

\begin{example}
    If $k\rightarrow R\rightarrow S$ are maps of $\bE_\infty$-rings, then there are natural
    equivalences
    $$\F^\star_\H\InfEoo_{S/k}\otimes_{\F^\star_\H\InfEoo_{R/k}}R\we\F^\star_\H\InfEoo_{S/R}\quad\text{and}\quad
    \F^\star_\H\InfhatEoo_{S/k}\widehat{\otimes}_{\F^\star_\H\InfhatEoo_{R/k}}R\we\F^\star_\H\InfhatEoo_{S/R}.$$
\end{example}

\begin{remark}
    As a filtered $\F^\star_\H\InfEoo_{R/k}$-algebra, $\F^\star_\H\InfEoo_{S/k}$ admits a universal
    property as described by an adjunction
    $$\F^\star_\H\InfEoo_{-/k}\colon\CAlg_R\rightleftarrows\FCAlg_{\F^\star_\H\Inf_{R/k}^{\bE_\infty}}\colon\gr^0,$$ and
    similarly in the complete setting. See~F\&C.II.2.1.
\end{remark}

Thanks to the Poincar\'e lemma, the interesting questions are as follows. (a) When is the Hodge filtration on
$\InfEoo_{R/k}$ complete? (b) What is the completion $\InfhatEoo_{R/k}$? When $\InfhatEoo_{R/k}$ is of
interest, the Hodge spectral sequence gives a new way of understanding it.

To analyze completeness and completion, we require a result on descent for canonical covers which will appear in forthcoming
work with Ryomei Iwasa and Achim Krause.

\begin{definition}[Canonical covers]\label{def:canonical} Let $f\colon (R/A)\rightarrow (S/B)$ be a map of pairs of connective $\bE_\infty$-rings,
    corresponding to a commutative diagram
    $$\xymatrix{
        A\ar[r]\ar[d]&B\ar[d]\\
        R\ar[r]&S.
    }$$
    Let $S^\bullet$ be the \v{C}ech complex of $R\rightarrow S$. Say that $f$ is a canonical cover
    if the natural map $R\rightarrow\Tot(S^\bullet)$ is an equivalence, and if this remains true
    after base change to any other connective $\bE_\infty$-ring. Note that this condition depends
    only on the map $R\rightarrow S$ and not on any other information about the morphism of pairs.
\end{definition}

\begin{remark}
    In the sense of topos theory, there are canonical covers of connective $\bE_\infty$-rings which
    are not generated by a single canonical cover in the sense of Definition~\ref{def:canonical}.
    An example of a non-finitely generated canonical cover of $\Spec\bZ$ is given
    in~\cite[\href{https://stacks.math.columbia.edu/tag/0EUE}{Tag 0EUE}]{stacks-project} in the
    context of ordinary commutative rings.
    In our work with Iwasa and Krause, we also show that Hodge-completed $\bE_\infty$-infinitesimal
    cohomology is a sheaf with respect to all canonical covers on the site opposite to the
    $\infty$-category of connective $\bE_\infty$-rings.
\end{remark}

\begin{theorem}[Descent, Antieau--Iwasa--Krause]\label{thm:descent}
    Let $f\colon(R/A)\rightarrow(S/B)$ be a canonical cover of connective $\bE_\infty$-rings. If
    $(S^\bullet/B^\bullet)$ denotes the \v{C}ech complex of $f$, then the natural map
    $$\F^\star_\H\InfhatEoo_{R/A}\rightarrow\Tot(\F^\star_\H\InfhatEoo_{S^\bullet/B^\bullet})$$ is
    an equivalence.
\end{theorem}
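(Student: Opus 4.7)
The plan is to use Hodge completeness to reduce the equivalence of filtered $\bE_\infty$-algebras to an equivalence on each associated graded piece, at which point Lemma~\ref{lem:graded} translates the claim into a descent statement for symmetric powers of the $\bE_\infty$-cotangent complex along the canonical cover.

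First, I would observe that the target $\Tot(\F^\star_\H\InfhatEoo_{S^\bullet/B^\bullet})$ is itself Hodge-complete: the subcategory $\widehat{\FCAlg}_A\subset\FCAlg_A$ is reflective with left adjoint given by completion, so the inclusion preserves limits, and in particular totalizations of complete filtered algebras remain complete. As the source is complete by construction, the comparison map is an equivalence in $\widehat{\FCAlg}_A$ as soon as it induces an equivalence on each graded piece. Since $\gr^i$ preserves both limits and colimits, Lemma~\ref{lem:graded} reduces the theorem to showing that for every $i\geq 0$ the map
$$\Sym^i_R(\L_{R/A}^{\bE_\infty}[-1])\longrightarrow\Tot\bigl(\Sym^i_{S^\bullet}(\L_{S^\bullet/B^\bullet}^{\bE_\infty}[-1])\bigr)$$
is an equivalence.

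The case $i=0$ is exactly the defining property $R\we\Tot(S^\bullet)$ of a canonical cover. For $i=1$, the two transitivity cofiber sequences associated with the compositions $A\to R\to S^n$ and $A\to B^n\to S^n$ combine to express $\L^{\bE_\infty}_{S^n/B^n}$ in terms of $S^n\otimes_R\L^{\bE_\infty}_{R/A}$ together with auxiliary contributions from $\L^{\bE_\infty}_{S^n/R}$ and $\L^{\bE_\infty}_{B^n/A}$; the stability of the canonical cover condition under base change, which is built into Definition~\ref{def:canonical}, yields the module-level identity $M\we\Tot(M\otimes_R S^\bullet)$ for reasonable $R$-modules $M$, and the auxiliary terms should assemble into cosimplicial systems whose totalizations vanish by Poincar\'e-lemma-style arguments in the spirit of Theorem~\ref{lem:intro_poincare}. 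For $i\geq 2$ one proceeds by the same strategy applied to $\Sigma_i$-equivariant tensor powers.

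The main obstacle is that $\Sym^i$ is nonlinear and does not commute with $\Tot$ in general, so the reduction to linear module descent is only viable after showing that the cosimplicial object $\Sym^i_{S^\bullet}(\L^{\bE_\infty}_{S^\bullet/B^\bullet}[-1])$ can, upon totalization, be rewritten in terms of data controlled purely by $\Sym^i_R(\L^{\bE_\infty}_{R/A}[-1])$ and $S^\bullet$ regarded as an $R$-algebra. Carrying this out rigorously in the $\bE_\infty$-setting, and in particular controlling the ``$B^\bullet$-relative noise'' in the cotangent complex of each $S^n$, is where I expect the genuine technical work to lie; presumably this is the content of the forthcoming Antieau--Iwasa--Krause descent theorem, invoked here as a black box.
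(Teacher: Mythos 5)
The paper does not actually prove this theorem: it is stated as a result of forthcoming work with Iwasa and Krause, so there is no argument of the paper's to compare yours against. Your opening reduction is nevertheless correct and worth recording: both sides are complete (a limit of complete filtered objects is complete, and $\gr^i$, being the cofiber of the limit-preserving functors $\F^{i+1}\rightarrow\F^i$ in a stable setting, commutes with $\Tot$), so the theorem is equivalent to the graded statement
$$\Sym^i_R(\L_{R/A}^{\bE_\infty}[-1])\we\Tot\bigl(\Sym^i_{S^\bullet}(\L_{S^\bullet/B^\bullet}^{\bE_\infty}[-1])\bigr)\quad\text{for all }i\geq 0.$$
Note, however, that this is precisely the $M=A$ case of the corollary that the paper \emph{deduces from} the theorem, so you cannot appeal to that corollary without circularity; you must prove the graded descent independently, and that is exactly where your argument stops being a proof.

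Concretely: (a) your $i=1$ step needs the contributions of $\L^{\bE_\infty}_{S^n/R}$ and $S^n\otimes_{B^n}\L^{\bE_\infty}_{B^n/A}$ from the two transitivity sequences to die after totalization, but Definition~\ref{def:canonical} imposes no condition whatsoever on $A\rightarrow B$ (the canonical-cover condition ``depends only on the map $R\rightarrow S$''), so there is no a priori reason for the $B^\bullet$-indexed terms to vanish, termwise or cosimplicially; (b) the Poincar\'e lemma (Theorem~\ref{lem:intro_poincare}) identifies the \emph{colimit} of the Hodge filtration and says nothing about totalizations of relative cotangent complexes, so it cannot supply the vanishing you invoke; (c) the module-level identity $M\we\Tot(M\otimes_RS^\bullet)$ for bounded-below $M$ does not follow formally from the definition, which only guarantees descent after base change to connective rings --- one needs a Postnikov-tower/connectivity argument to reach bounded-below modules, and a further convergence argument to control the nonlinear functor $\Sym^i$ against $\Tot$, which you acknowledge but do not supply. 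These points are the entire substance of the theorem; as written, your proposal correctly reduces the statement to an equivalent one and then defers that equivalent statement back to the same black box.
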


\begin{corollary}
    Let $(R/A)\rightarrow(S/B)$ be a canonical cover of connective $\bE_\infty$-rings with \v{C}ech
    complex $(S^\bullet/B^\bullet)$. If $M$ is a bounded below $A$-module, then the natural maps
    $$M\otimes_A\Sym^i_R(\L_{R/A}^{\bE_\infty}[-1])\rightarrow\Tot(M\otimes_A\Sym^i_{S^\bullet}(\L_{S^\bullet/B^\bullet}^{\bE_\infty}[-1]))$$
    are equivalences for all $i\geq 0$. Taking $M=A$, we find that the natural maps
    $$\Sym^i_R(\L_{R/A}^{\bE_\infty}[-1])\rightarrow\Tot(\Sym^i_{S^\bullet}(\L_{S^\bullet/B^\bullet}^{\bE_\infty}[-1]))$$
    are equivalences for all $i\geq 0$.
\end{corollary}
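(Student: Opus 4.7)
The plan is to first establish the $M=A$ case by passing to graded pieces of the descent equivalence of Theorem~\ref{thm:descent}, and then bootstrap to a general bounded below $M$ via a connectivity argument.

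First I would apply the functor $\gr^i_\H$ to the equivalence
$$\F^\star_\H\InfhatEoo_{R/A}\xrightarrow{\simeq}\Tot(\F^\star_\H\InfhatEoo_{S^\bullet/B^\bullet})$$
provided by Theorem~\ref{thm:descent}. Since limits of filtered spectra are computed filtration-level-wise, $\F^i$ preserves totalizations of cosimplicial filtered objects, and hence so does $\gr^i$; moreover, graded pieces are insensitive to Hodge completion. Combined with Lemma~\ref{lem:graded}, which identifies $\gr^i_\H\InfEoo\simeq\Sym^i(\L^{\bE_\infty}[-1])$ on each pair, this produces the $M=A$ equivalence at the end of the corollary statement.

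Next, write $C^\bullet:=\Sym^i_{S^\bullet}(\L^{\bE_\infty}_{S^\bullet/B^\bullet}[-1])$, viewed as a cosimplicial object of $\Mod_A$ via restriction of scalars along $A\to B^\bullet\to S^\bullet$. The remaining task is to show that $M\otimes_A-$ commutes with $\Tot(C^\bullet)$, for then the general statement follows from the $M=A$ case. For any fixed $k$, the partial totalization $\Tot^k(C^\bullet)$ is a finite limit, and tensor product with $M$ is exact in the stable $\infty$-category $\Mod_A$, so $M\otimes_A\Tot^k(C^\bullet)\simeq\Tot^k(M\otimes_A C^\bullet)$. It therefore suffices to exchange $M\otimes_A-$ with the sequential limit $\lim_k\Tot^k$.

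The key input — and the main obstacle — is a uniform lower bound on the connectivity of $C^\bullet$. Each $B^n\to S^n$ is a map of connective $\bE_\infty$-rings, so $\L^{\bE_\infty}_{S^n/B^n}$ is connective, $\L^{\bE_\infty}_{S^n/B^n}[-1]$ lies in $D^{\geq-1}$, and hence $C^n=\Sym^i_{S^n}(\L^{\bE_\infty}_{S^n/B^n}[-1])$ is bounded below by a constant $c(i)$ independent of $n$. The cofiber of $\Tot^k(C^\bullet)\to\Tot^{k-1}(C^\bullet)$ is a $(-k)$-fold shift of a term built from $C^k$, so its connectivity grows linearly in $k$; the inverse system is thus pro-constant on any bounded range of degrees. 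Because $M$ is bounded below, the Künneth-type bound $M\otimes_A(-)$ shifts connectivity only by a fixed amount, so the tower $\{M\otimes_A\Tot^k(C^\bullet)\}_k$ is likewise pro-constant degree-wise. This allows $M\otimes_A-$ to be moved past $\lim_k$, yielding $M\otimes_A\Tot(C^\bullet)\simeq\Tot(M\otimes_A C^\bullet)$ and completing the proof. The only nontrivial point is verifying the uniform lower bound on $\Sym^i$ of a $(-1)$-connective object, which requires a mild connectivity estimate for derived symmetric powers but is otherwise standard.
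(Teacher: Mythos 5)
The first half of your plan (the $M=A$ case) is sound: $\F^i$ commutes with $\Tot$ because limits of filtered objects are computed levelwise, $\gr^i$ is a finite limit in the stable setting and so also commutes with $\Tot$, graded pieces are insensitive to Hodge completion, and Lemma~\ref{lem:graded} identifies them. (The paper only cites F\&C.II.6.4 for this corollary, so a line-by-line comparison is impossible, but this reduction is surely part of the intended argument.)

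The second half has a genuine gap, located exactly at the step you describe as standard. The fiber of $\Tot^k(C^\bullet)\to\Tot^{k-1}(C^\bullet)$ is $\Omega^kN^kC^\bullet$, a \emph{downward} $k$-fold shift of a summand of $C^k$; if the $C^n$ are uniformly $c$-connective, this layer is only $(c-k)$-connective, so the connectivity of the layers \emph{decreases} linearly in $k$ rather than growing. Hence uniform boundedness below of the cosimplicial terms does not make the tower $\{\Tot^k(C^\bullet)\}_k$ pro-constant in any fixed degree, and it does not let you move $M\otimes_A-$ past $\lim_k\Tot^k$. A concrete failure under precisely your hypotheses: for the Amitsur complex of $\bS\to\bF_p$ (all terms connective) and the bounded below module $M=\bS[1/p]$, one has $\Tot\bigl(M\otimes_\bS\bF_p^{\otimes(\bullet+1)}\bigr)\simeq 0$ since each term vanishes, while $M\otimes_\bS\Tot\bigl(\bF_p^{\otimes(\bullet+1)}\bigr)\simeq\bS[1/p]\otimes_\bS\bS_p\neq 0$. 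The corollary for general bounded below $M$ therefore cannot follow from connectivity alone; it must use the canonical-cover hypothesis of Definition~\ref{def:canonical} --- in particular the requirement that the descent equivalence persists after arbitrary base change --- and that input is entirely absent from your argument.
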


\begin{proof}
    See~F\&C.II.6.4.
\end{proof}

The construction of the Gauss--Manin connection in~\cite{antieau_gm} carries over to $\InfEoo$.

\begin{proposition}[Gauss--Manin filtration]
    Given maps $k\rightarrow R\rightarrow S$ of $\bE_\infty$-rings, there is a complete exhaustive
    multiplicative filtration $\F^\star_\GM\F^\star_\H\InfhatEoo_{S/k}$ on
    $\F^\star_\H\InfhatEoo_{S/k}$ with graded pieces
    $$\gr^i_\GM\F^\star_\H\InfhatEoo_{S/k}\we\F^\star_\H\InfhatEoo_{S/R}\widehat{\otimes}_R\Sym^i_R(\L_{R/k}^{\bE_\infty}[-1]).$$
\end{proposition}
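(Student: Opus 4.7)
The plan is to mimic the construction of the Gauss--Manin filtration from~\cite{antieau_gm}, now in the setting of $\bE_\infty$-infinitesimal cohomology, using the universal property of $\F_\H^\star\InfhatEoo_{-/-}$ as a left adjoint together with the base change properties of Proposition~\ref{prop:base_change}.

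First, I would set up the framework of bifiltered complete $\bE_\infty$-algebras, in which objects carry two commuting decreasing filtrations and completeness is imposed separately in each direction. In this category, the universal property of $\F_\H^\star\InfhatEoo_{R/k}$ upgrades: viewing the Hodge filtration $\F_\H^\star\InfhatEoo_{R/k}$ as both the object and as its own second filtration, one obtains a canonical bifiltered enhancement, which I will denote $\F_\GM^\star\F_\H^\star\InfhatEoo_{R/k}$, whose underlying Hodge-filtered algebra is $\F_\H^\star\InfhatEoo_{R/k}$ and whose GM-graded pieces are $\gr_\H^i\F_\H^\star\InfhatEoo_{R/k}\we\Sym^i_R(\L_{R/k}^{\bE_\infty}[-1])$ by Lemma~\ref{lem:graded}.

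Next, I would invoke the relative universal property noted in the remark after the example: $\F_\H^\star\InfhatEoo_{S/k}$ is the universal complete filtered $\bE_\infty$-algebra over $\F_\H^\star\InfhatEoo_{R/k}$ whose zeroth graded is $S$. I would then define
$$\F_\GM^\star\F_\H^\star\InfhatEoo_{S/k}\;:=\;\F_\GM^\star\F_\H^\star\InfhatEoo_{R/k}\,\widehat{\otimes}_{\F_\H^\star\InfhatEoo_{R/k}}\,\F_\H^\star\InfhatEoo_{S/k},$$
performed in bifiltered complete $\bE_\infty$-algebras. Multiplicativity and completeness are automatic from the construction, and exhaustivity follows from the exhaustivity of the Hodge filtration on $\F_\H^\star\InfhatEoo_{R/k}$. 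The graded pieces compute as
$$\gr^i_\GM\F_\H^\star\InfhatEoo_{S/k}\;\we\;\Sym^i_R(\L_{R/k}^{\bE_\infty}[-1])\,\widehat{\otimes}_{\F_\H^\star\InfhatEoo_{R/k}}\,\F_\H^\star\InfhatEoo_{S/k},$$
and since the first factor is an $R$-module, the tensor factors through $R$; by the base change formula $\F_\H^\star\InfhatEoo_{S/k}\widehat{\otimes}_{\F_\H^\star\InfhatEoo_{R/k}}R\we\F_\H^\star\InfhatEoo_{S/R}$ established earlier, this becomes $\Sym^i_R(\L_{R/k}^{\bE_\infty}[-1])\widehat{\otimes}_R\F_\H^\star\InfhatEoo_{S/R}$, as desired.

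The main obstacle I expect is purely technical: making precise the bifiltered complete $\bE_\infty$-algebra formalism and verifying that the two completions (in the Hodge and GM directions) interact correctly with the base change identity used to identify the graded pieces. In particular, one must check that the completed base change of a graded $R$-module along $\F_\H^\star\InfhatEoo_{R/k}\rightarrow R$ recovers the Hodge-completed tensor product over $R$ with $\F_\H^\star\InfhatEoo_{S/R}$; this is essentially the content of the base change argument of F\&C.II.2.6 applied one filtration layer up, and once this is in place the rest of the argument is a formal consequence of the adjunction defining $\F_\H^\star\InfhatEoo_{-/-}$ and of Lemma~\ref{lem:graded}.
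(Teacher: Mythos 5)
Your construction is correct and is essentially the intended one: the paper gives no argument here beyond citing F\&C.II.4.12, and the construction there (as in the classical Katz--Oda picture) is exactly what you describe --- take the canonical bifiltration $\F^i_{\GM}\F^j_\H = \F^{\max(i,j)}_\H$ on $\F^\star_\H\InfhatEoo_{R/k}$ and tensor it up along $\F^\star_\H\InfhatEoo_{R/k}\rightarrow\F^\star_\H\InfhatEoo_{S/k}$, then identify the graded pieces via the base-change equivalence $\F^\star_\H\InfhatEoo_{S/k}\widehat{\otimes}_{\F^\star_\H\InfhatEoo_{R/k}}R\we\F^\star_\H\InfhatEoo_{S/R}$. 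Two small points worth making explicit: the $\GM$-graded piece $\gr^i_{\GM}$ of the canonical bifiltration is $\Sym^i_R(\L^{\bE_\infty}_{R/k}[-1])$ placed in Hodge weight $i$ (this twist is what produces the shifts $\F^{\star-i}_\H\InfhatEoo_{S/R}$ appearing in the coherent-cochain-complex description in the following remark), and the assertion that ``the tensor factors through $R$'' requires observing that the $\F^\star_\H\InfhatEoo_{R/k}$-module structure on $\gr^i_{\GM}$ genuinely factors through $\gr^0_\H=R$, which holds because the action of $\F^{j}_\H$ for $j\geq 1$ lands in $\F^{i+j}_\H\subseteq\F^{i+1}_\H$ and hence dies in $\gr^i_\H$.
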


\begin{proof}
    See~F\&C.II.4.12.
\end{proof}

\begin{remark}
    As a coherent cochain complex (in complete filtered spectra), the Gauss--Manin filtration takes
    the form
    $$\F^\star_\H\InfhatEoo_{S/R}\xrightarrow{\nabla}\F^{\star-1}_\H\InfhatEoo_{S/R}\widehat{\otimes}_R\L_{R/k}^{\bE_\infty}\xrightarrow{\nabla}\F^{\star-2}\InfhatEoo_{S/k}\widehat{\otimes}_R\Sym^2_R(\L_{R/k}^{\bE_\infty}[-1])[2]\rightarrow\cdots.$$
\end{remark}

\begin{definition}[Completeness]
    Let $k\rightarrow R$ be a map of $\bE_\infty$-rings with \v{C}ech complex $R^\bullet$. If $M$
    is a $k$-module, we let $M_R^\wedge=\Tot(M\otimes_kR^\bullet)$. We say that $M$ is $R$-complete if
    the natural map $M\rightarrow M_R^\wedge$ is an equivalence. This definition does not
    agree in general with other definitions of completion.
\end{definition}

We have the following three results.

\begin{proposition}\label{prop:1connective}
    If $k\rightarrow R$ is a $1$-connective map of connective $\bE_\infty$-rings, then $\F^s\InfEoo_{R/k}$ is
    $s$-connective. In particular, the Hodge filtration is complete.
\end{proposition}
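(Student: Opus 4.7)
The plan is to combine Lemma~\ref{lem:graded} with a reduction to free $\bE_\infty$-$k$-algebras.

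First, since $k\to R$ is 1-connective and $k,R$ are connective $\bE_\infty$-rings, the $\bE_\infty$-cotangent complex $\L_{R/k}^{\bE_\infty}$ is 1-connective. This is a standard consequence of the cellular description of connective $\bE_\infty$-rings: $R$ can be built from $k$ by attaching $\bE_\infty$-cells of dimension $\geq 1$, and each such cell contributes to $\L_{R/k}^{\bE_\infty}$ in the same dimension. Consequently $\L_{R/k}^{\bE_\infty}[-1]$ is 2-connective, and since tensor products over a connective $\bE_\infty$-ring and $\Sigma_s$-homotopy orbits both preserve connectivity, Lemma~\ref{lem:graded} gives that each graded piece $\gr^s_\H\InfEoo_{R/k} \we \Sym^s_R(\L_{R/k}^{\bE_\infty}[-1])$ is $2s$-connective for every $s\geq 0$.

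Next, I would bootstrap from graded pieces to filtration steps by reducing to free algebras. Because $\F^\star_\H\InfEoo_{-/k}$ is a left adjoint it preserves sifted colimits, and any connective $\bE_\infty$-$k$-algebra $R$ with 1-connective structure map can be presented as a sifted colimit of free algebras $\Sym_k V$ on 1-connective $k$-modules $V$. It therefore suffices to check that $\F^s_\H\InfEoo_{\Sym_k V/k}$ is $s$-connective when $V$ is 1-connective. For such free algebras, the universal property produces an explicit Koszul-type model for $\F^\star_\H\InfEoo_{\Sym_k V/k}$, whose $s$-th filtration step has the form
$$\Sym_k V\otimes_k\bigoplus_{n\geq s}\Sym^n_k(V[-1]).$$
Since $V[-1]$ is 2-connective, each $\Sym^n_k(V[-1])$ is $2n$-connective, and hence the displayed expression is $2s$-connective, so a fortiori $s$-connective. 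Filtration-wise $s$-connectivity is preserved under colimits in $\FMod_k$, so the bound propagates to arbitrary $R$ with 1-connective structure map.

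Completeness of the Hodge filtration is then immediate: for each integer $i$, one can take $s > i$ and use that $\F^s_\H\InfEoo_{R/k}$ is $s$-connective to see that $\pi_i(\lim_s\F^s_\H\InfEoo_{R/k}) = 0$, whence $\lim_s\F^s_\H\InfEoo_{R/k}\we 0$. The main technical obstacle is the identification of the Koszul-type model for $\F^\star_\H\InfEoo_{\Sym_k V/k}$ via the universal property (and justifying that the restriction of $\F^\star_\H\InfEoo_{-/k}$ to 1-connective objects is generated, under sifted colimits, by free algebras of this type); once those ingredients are in hand the connectivity bounds are immediate by inspection. The 1-connectivity hypothesis is essential throughout: without it, the graded pieces are not sufficiently connected to force the filtration steps to be $s$-connective.
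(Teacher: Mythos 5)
Your reduction to free algebras fails at the outset, because free algebras on $1$-connective modules do not satisfy the conclusion you reduce to. Since $\F^0_\H\InfEoo_{R/k}\we k$ (Poincar\'e lemma, Theorem~\ref{lem:intro_poincare}) and $\gr^0_\H\InfEoo_{R/k}\we R$, one always has $\F^1_\H\InfEoo_{R/k}\we\fib(k\rightarrow R)$; so $\F^1$ is $1$-connective exactly when the map $k\rightarrow R$ is. But $k\rightarrow\Sym_kV$ is \emph{not} a $1$-connective map for $V$ $1$-connective: its fiber has $\pi_0\iso\pi_1(\Sym^{\geq 1}_kV)$, which for $V=k[1]$ is $\pi_0k\neq 0$. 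Thus the statement ``$\F^s_\H\InfEoo_{\Sym_kV/k}$ is $s$-connective for $V$ $1$-connective'' is false already for $s=1$, and no colimit argument can propagate it. The underlying reason is that your ``Koszul-type model'' $\F^s\we\Sym_kV\otimes_k\bigoplus_{n\geq s}\Sym^n_k(V[-1])$ is wrong: it asserts that the Hodge filtration splits into its graded pieces, but then $\F^0$ would be $\Sym_k(V\oplus V[-1])$, contradicting $\InfEoo_{\Sym_kV/k}\we k$. The filtration is a nonsplit iterated extension, and the assertion that $\F^s$ is controlled by $\gr^{\geq s}$ in a connectivity-preserving way is essentially equivalent to the completeness being proved, so the argument is circular at its key step. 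A workable reduction must instead proceed by cell attachment (pushouts along $\Sym_k(k[n])\rightarrow k$ with $n\geq 1$, which do preserve the hypothesis) or by comparison with the adic/cobar tower, together with an argument that the relevant connectivity is preserved under those operations.

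Separately, the connectivity bookkeeping is off throughout. With the convention the paper uses (a $1$-connective map is one whose \emph{fiber} is $1$-connective, as in the application to $R\otimes_kR\rightarrow R$ in the Quillen section), $\L^{\bE_\infty}_{R/k}$ is $2$-connective, hence $\L^{\bE_\infty}_{R/k}[-1]$ is $1$-connective --- desuspension lowers connectivity, it does not raise it by one --- and $\Sym^s_R(\L^{\bE_\infty}_{R/k}[-1])$ is $s$-connective, not $2s$-connective. The $2s$ bound is genuinely false: for $\bS\rightarrow\bZ$ one has $\pi_2\L^{\bE_\infty}_{\bZ/\bS}\iso\bZ/2$, so $\gr^1$ has nonzero $\pi_1$. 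The corrected bound ($\gr^s$ is $s$-connective) is exactly what the proposition's conclusion requires of the graded pieces, but, as explained above, it does not by itself yield the $s$-connectivity of $\F^s$.
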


\begin{proof}
    See~F\&C.II.7.1.
\end{proof}

\begin{proposition}
    Suppose that $k\rightarrow R$ is a map of connective $\bE_\infty$-rings such that
    $\pi_0k\rightarrow\pi_0S$ is surjective. Then,
    for any bounded below $k$-module, $M\widehat{\otimes}_k\F^\star\InfEoo_{R/k}$ is a complete
    filtration on $M_R^\wedge$. In particular, taking $M=k$, we find
    $k_R^\wedge\we\InfhatEoo_{R/k}$.
\end{proposition}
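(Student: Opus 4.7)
The plan is to reduce, via descent, to the setting of Proposition~\ref{prop:1connective}. Let $R^\bullet$ denote the \v{C}ech complex of $k\to R$, so $R^n=R^{\otimes_k(n+1)}$. Consider the morphism of pairs $f\colon(R/k)\to(R/R)$ whose bottom map is the identity on $R$; since the lower map is trivially a canonical cover in the sense of Definition~\ref{def:canonical}, so is $f$, and its \v{C}ech complex in cosimplicial degree $n$ is the pair $(R/R^n)$, equipped with the iterated multiplication $R^n\to R$.

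Surjectivity of $\pi_0 k\to\pi_0 R$ gives $\pi_0 R\otimes_{\pi_0 k}\pi_0 R\we\pi_0 R$, so the fiber of the multiplication $R\otimes_k R\to R$ is $1$-connective. Writing the iterated multiplication $R^n\to R$ as a composite of multiplications tensored with identities, the same holds for every $n\geq 0$. Proposition~\ref{prop:1connective} then guarantees that $\F^s_\H\InfEoo_{R/R^n}$ is $s$-connective, so the Hodge filtration is already complete, and the Poincar\'e lemma identifies the underlying $\bE_\infty$-ring as $\InfhatEoo_{R/R^n}\we R^n$.

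Applying the corollary of Theorem~\ref{thm:descent} to the canonical cover $f$ and to the bounded below $k$-module $M$ gives, for each $i\geq0$, an equivalence
$$M\otimes_k\Sym^i_R(\L^{\bE_\infty}_{R/k}[-1])\we\Tot\bigl(M\otimes_k\Sym^i_{R^\bullet}(\L^{\bE_\infty}_{R/R^\bullet}[-1])\bigr)$$
on graded pieces. Combining this with Ariotta's equivalence between complete filtered spectra and coherent cochain complexes (invoked above in the construction of the Hodge spectral sequence), this graded descent lifts to an equivalence of complete filtered $k$-modules
$$M\widehat{\otimes}_k\F^\star_\H\InfhatEoo_{R/k}\we\Tot\bigl(M\widehat{\otimes}_k\F^\star_\H\InfhatEoo_{R/R^\bullet}\bigr).$$
On the right, the $s$-connectivity of $\F^s_\H\InfEoo_{R/R^n}$ together with the bounded below hypothesis on $M$ ensures each term is a complete filtration whose underlying spectrum is $M\otimes_k R^n$; the same connectivity estimates allow commuting $\Tot$ with passage to the underlying spectrum, yielding that the underlying of the right-hand side is $\Tot(M\otimes_k R^\bullet)=M_R^\wedge$. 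Setting $M=k$ gives the identification $\InfhatEoo_{R/k}\we k_R^\wedge$.

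The main obstacle is the passage from graded-pieces descent to complete-filtered descent: the cleanest approach uses Ariotta's equivalence between complete filtered spectra and coherent cochain complexes to transport the graded-level equivalence to the filtered level, after which the underlying-spectrum claim reduces to uniform-connectivity bookkeeping stemming from Proposition~\ref{prop:1connective}.
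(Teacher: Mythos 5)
Your proof is correct and follows exactly the route the paper intends: the paper's own ``proof'' is only a citation to F\&C.II.7.3, but the descent theorem and its corollary are introduced immediately beforehand precisely ``to analyze completeness and completion,'' and applying them to the canonical cover $(R/k)\rightarrow(R/R)$ --- whose \v{C}ech complex has terms $(R/R^{\otimes_k(n+1)})$, each handled by Proposition~\ref{prop:1connective} and the Poincar\'e lemma via the surjectivity of $\pi_0k\rightarrow\pi_0R$ --- is the intended argument. Two small points: the symmetric powers in your displayed equivalence should be formed over $S^\bullet=R$, i.e., $\Sym^i_R(\L^{\bE_\infty}_{R/R^\bullet}[-1])$ rather than over $R^\bullet$; and you do not need Ariotta's theorem, only the standard fact that a map of complete filtered spectra is an equivalence if and only if it is one on graded pieces, together with the observation that the Hodge filtration is constant in non-positive weights, so that ``underlying object'' means $\F^0$, which commutes with $\Tot$.
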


\begin{proof}
    See~F\&C.II.7.3.
\end{proof}

\begin{theorem}[Canonical invariance]
    Suppose that $k\rightarrow R\rightarrow S$ is a map of connective $\bE_\infty$-rings. If
    $R\rightarrow S$ is a canonical cover such that $\pi_0R\rightarrow\pi_0S$ is surjective, then
    $\Infhat_{R/k}\we\Infhat_{S/k}$.
\end{theorem}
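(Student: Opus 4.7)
The plan is to combine the computation of $\InfhatEoo_{S/R}$ coming from the second proposition above with the Gauss--Manin filtration to identify $\InfhatEoo_{S/k}$ with $\InfhatEoo_{R/k}$ at the level of coherent cochain complexes.

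First, I apply the second proposition above to the map $R\to S$ in place of $k\to R$. Since $\pi_0R\to\pi_0S$ is surjective by hypothesis, that proposition yields $\InfhatEoo_{S/R}\we R_S^\wedge$. Because $R\to S$ is a canonical cover, the definition of canonical cover gives $R\we R_S^\wedge$, so $\InfhatEoo_{S/R}\we R$ as $\bE_\infty$-$R$-algebras.

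Next, I consider the Gauss--Manin filtration $\F^\star_\GM\F^\star_\H\InfhatEoo_{S/k}$ associated to the tower $k\to R\to S$. Forgetting the Hodge filtration produces a complete exhaustive multiplicative filtration on $\InfhatEoo_{S/k}$ whose graded pieces are the underlying $\bE_\infty$-rings of $\F^\star_\H\InfhatEoo_{S/R}\widehat\otimes_R\Sym^i_R(\L_{R/k}^{\bE_\infty}[-1])$, which by the first step equal $\Sym^i_R(\L_{R/k}^{\bE_\infty}[-1])$. The associated coherent cochain complex, obtained from the description in the remark following the Gauss--Manin proposition by applying $\InfhatEoo_{S/R}\we R$, takes the form
$$R\to\L_{R/k}^{\bE_\infty}\to\Sym^2_R(\L_{R/k}^{\bE_\infty}[-1])[2]\to\cdots,$$
which is precisely the Hodge coherent cochain complex for $\InfhatEoo_{R/k}$ from the Hodge spectral sequence construction.

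Finally, by Ariotta's equivalence between complete filtered spectra and coherent cochain complexes, the two complete filtrations agree, and therefore so do their underlying $\bE_\infty$-rings, giving the natural map $\InfhatEoo_{R/k}\to\InfhatEoo_{S/k}$ induced by $R\to S$ as an equivalence.

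The main obstacle will be checking naturality: the differentials of the Gauss--Manin coherent complex for $\InfhatEoo_{S/k}$, after substituting $\InfhatEoo_{S/R}\we R$, must coincide with the Hodge differentials for $\InfhatEoo_{R/k}$, and the resulting equivalence must be the one induced by functoriality of $\InfhatEoo_{-/k}$ along $R\to S$. I expect this to follow from the functoriality of the Gauss--Manin construction with respect to maps of towers of $\bE_\infty$-rings, together with the naturality of the Ariotta equivalence, but this compatibility must be verified carefully.
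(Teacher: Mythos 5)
The paper itself only cites F\&C.II.7.5 for this result, so I am judging your argument on its own terms. Your two computational inputs are correct and well chosen: applying the completeness proposition to $R\to S$ gives $\InfhatEoo_{S/R}\we R_S^\wedge$, the canonical-cover hypothesis gives $R\we R_S^\wedge$, and the Gauss--Manin filtration for the tower $k\to R\to S$ then has graded pieces whose underlying objects are $\Sym^i_R(\L^{\bE_\infty}_{R/k}[-1])$, matching the Hodge graded pieces of $\InfhatEoo_{R/k}$.

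The gap is the step ``by Ariotta's equivalence \dots the two complete filtrations agree.'' Ariotta's theorem does not let you conclude that two complete filtrations are equivalent because their coherent cochain complexes have equivalent terms; the differentials and all higher coherences are part of the data, and two filtrations with identical graded pieces can have very different underlying objects. You flag this at the end as a ``naturality check,'' but as written there is no comparison map between the two filtrations at all, so there is nothing yet to check naturality of --- this is the essential content of the theorem, not a verification. The repair is to reverse the logic: first produce the map, then compute. Observe that the Gauss--Manin filtration for the degenerate tower $k\to R\to R$ \emph{is} the Hodge filtration on $\F^\star_\H\InfhatEoo_{R/k}$ (since $\InfhatEoo_{R/R}\we R$), and that the Gauss--Manin construction is functorial in the top ring of the tower for fixed $k\to R$. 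The map $R\to S$ therefore induces a map of complete exhaustive Gauss--Manin-filtered objects from $\F^\star_\H\InfhatEoo_{R/k}$ to $\F^\star_\H\InfhatEoo_{S/k}$, which on the $i$th graded piece is $R\widehat{\otimes}_R\Sym^i_R(\L^{\bE_\infty}_{R/k}[-1])\rightarrow\F^\star_\H\InfhatEoo_{S/R}\widehat{\otimes}_R\Sym^i_R(\L^{\bE_\infty}_{R/k}[-1])$, an equivalence on underlying objects by your first step. Completeness and exhaustiveness of both Gauss--Manin filtrations (which survive passage to underlying objects since the filtrations are supported in non-negative weights and limits commute with $\F^0_\H$) then give that the induced map $\InfhatEoo_{R/k}\to\InfhatEoo_{S/k}$ is an equivalence, and it is the natural one by construction. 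With that restructuring your argument goes through.
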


\begin{proof}
    See~F\&C.II.7.5.
\end{proof}

\begin{example}
    The $p$-complete sphere $\bS_p$ is $\bF_p$-complete, i.e., the 
    Adams spectral sequence converges. It follows that $\bS_p\we\InfhatEoo_{\bF_p/\bS}$.
    The Hodge spectral sequence gives a spectral sequence
    converging to the stable stems which, as far as we are aware, has not appeared in the literature
    before. The $\E^1$-page receives a map from the $\E^1$-page of the Adams spectral sequence
    thanks to Example~\ref{ex:e1} and maps to the $\E^2$-page of the Adams spectral sequence.
    Specifically, Example~\ref{ex:e1} implies that the $\E^1$-page of the Adams spectral sequence
    arises as the $\E^1$-page of $\F^\star_\H\Infhat^{\bE_1}_{\bF_p/\bS}$. Its d\'ecalage
    $\Dec(\F^\star_\H\Infhat^{\bE_1}_{\bF_p/\bS})$ has associated $\E^1$-page the $\E^2$-page of
    the Adams spectral sequence (see~\cite{antieau-decalage}). But, $\Dec(\F^\star_\H\Infhat^{\bE_1}_{\bF_p/\bS})$ also admits a
    natural $\bE_\infty$-structure with $\gr^0\we\bF_p$ as it can be constructed as
    $\Tot(\tau_{\geq\star}\bF_p^\bullet)$ where $\bF_p^\bullet$ is the \v{C}ech complex of
    $\bS\rightarrow\bF_p$. It follows that there are maps
    $$\F^\star_\H\Infhat^{\bE_1}_{\bF_p/\bS}\rightarrow\F^\star_\H\InfhatEoo_{\bF_p/\bS}\rightarrow\Dec(\F^\star_\H\Infhat^{\bE_1}_{\bF_p/\bS})$$ of filtered
    $\bE_1$-algebras.
\end{example}

\section{The Quillen spectral sequence}

If $k\rightarrow R$ is a map of $\bE_\infty$-rings, then
$\L_{R/(R\otimes_kR)}^{\bE_\infty}\we\L_{R/k}^{\bE_\infty}[1]$. It follows that the Hodge
filtration on $\InfEoo_{R/(R\otimes_kR)}$ has associated graded pieces equivalent to
$\Sym^i_R(\L_{R/k}^{\bE_\infty})$. This leads to an $\bE_\infty$-version of the Quillen spectral
sequence; for the classical version, see~\cite[Thm.~6.3]{quillen-cohomology} or~\cite[\href{https://stacks.math.columbia.edu/tag/08RC}{Tag 08RC}]{stacks-project}
or~F\&C.II.8.

\begin{proposition}
    If $k\rightarrow R$ is a map of $\bE_\infty$-rings, then there is a decreasing $\bE_\infty$-filtration on $B\otimes_kB$ with associated graded pieces $\Sym^i(\L_{R/k}^{\bE_\infty})$.
    If $k$ and $R$ are connective and $\pi_0k\rightarrow\pi_0R$ is surjective, then this filtration
    is complete.
\end{proposition}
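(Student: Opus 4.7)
The plan is to realize this filtration as the Hodge filtration on the $\bE_\infty$-infinitesimal cohomology of a carefully chosen pair. Specifically, consider the multiplication map $\mu\colon R\otimes_k R\rightarrow R$, which exhibits $R$ as an $\bE_\infty$-algebra over $R\otimes_k R$. Applying the construction to this pair gives the filtered $\bE_\infty$-ring $\F^\star_\H\InfEoo_{R/(R\otimes_k R)}$. By the Poincar\'e lemma (Theorem~\ref{lem:intro_poincare}) applied with base $R\otimes_k R$ and algebra $R$, the natural map $R\otimes_k R\rightarrow\InfEoo_{R/(R\otimes_k R)}$ is an equivalence, so the Hodge filtration is a decreasing multiplicative filtration whose underlying $\bE_\infty$-ring is precisely $R\otimes_k R$. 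This will be the asserted filtration.

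To identify the graded pieces, I would invoke Lemma~\ref{lem:graded} to obtain
$$\gr^i_\H\InfEoo_{R/(R\otimes_k R)}\simeq\Sym^i_R\bigl(\L^{\bE_\infty}_{R/(R\otimes_k R)}[-1]\bigr),$$
and then combine with the shift relation $\L^{\bE_\infty}_{R/(R\otimes_k R)}\simeq \L^{\bE_\infty}_{R/k}[1]$ noted at the opening of the section (which itself comes from the cotangent fiber sequence for $k\rightarrow R\otimes_k R\rightarrow R$ and the splitting of $\L^{\bE_\infty}_{(R\otimes_k R)/k}\otimes_{R\otimes_k R}R$ as $\L^{\bE_\infty}_{R/k}\oplus\L^{\bE_\infty}_{R/k}$). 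The internal shifts cancel and yield $\Sym^i_R(\L^{\bE_\infty}_{R/k})$, as required.

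For the completeness claim, suppose $k$ and $R$ are connective and $\pi_0 k\rightarrow\pi_0 R$ is surjective. Then $\pi_0(R\otimes_k R)\cong\pi_0 R\otimes_{\pi_0 k}\pi_0 R$, because $\pi_0$ is symmetric monoidal on connective spectra, and this ring equals $\pi_0 R$ since $\pi_0 R$ is a quotient of $\pi_0 k$. Consequently $\mu$ is an isomorphism on $\pi_0$, hence $1$-connective as a map of connective $\bE_\infty$-rings. Now apply Proposition~\ref{prop:1connective} to the pair $(R/(R\otimes_k R))$: it yields that $\F^s_\H\InfEoo_{R/(R\otimes_k R)}$ is $s$-connective, and in particular the Hodge filtration is complete.

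There is no real obstacle here: every ingredient is already established in the excerpt, and the proof is an orchestration of the Poincar\'e lemma, the graded computation from Lemma~\ref{lem:graded}, the cotangent complex shift for the multiplication map, and the $1$-connective criterion for completeness. The only small verification is the $\pi_0$ computation showing $1$-connectivity of $\mu$, which is immediate from the surjectivity hypothesis.
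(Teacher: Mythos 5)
Your proposal is correct and follows exactly the paper's route: the filtration is $\F^\star_\H\InfEoo_{R/(R\otimes_kR)}$, the Poincar\'e lemma identifies $\F^0$ with $R\otimes_kR$, the shift $\L^{\bE_\infty}_{R/(R\otimes_kR)}\we\L^{\bE_\infty}_{R/k}[1]$ together with Lemma~\ref{lem:graded} gives the graded pieces, and completeness follows from the $1$-connectivity of the multiplication map via Proposition~\ref{prop:1connective}. Your explicit $\pi_0$ verification of that $1$-connectivity is a welcome detail the paper states without proof.
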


\begin{proof}
    The filtration is $\F^\star_\H\InfEoo_{R/R\otimes_kR}$.
    This is an $\bE_\infty$-algebra in $\FMod_{R\otimes_kR}$ by construction and
    $\F^0_\H\InfEoo_{R/R\otimes_kR}$ is $R\otimes_kR$ by the Poincar\'e lemma.
    If $k$ and $R$ are connective and $\pi_0k\rightarrow\pi_0R$ is surjective, then the fiber of
    $R\otimes_kR\rightarrow R$ is $1$-connective, so the filtration is complete by
    Proposition~\ref{prop:1connective}.
\end{proof}

\begin{construction}[Quillen spectral sequence]
    The Hodge spectral sequence for $\F^\star_\H\InfEoo_{R/R\otimes_kR}$ takes the form
    $$\E^1_{s,t}=\pi_{s+t}(\Sym^{-s}_R(\L_{R/k}^{\bE_\infty}))\Rightarrow\pi_{s+t}(R\otimes_kR).$$
\end{construction}

\begin{remark}
    We have that $\F^\star_\H\InfEoo_{R/(R\otimes_kR)}\we
    R\otimes_{\F^\star_\H\InfEoo_{R/k}}R$ by Proposition~\ref{prop:base_change}.
\end{remark}

\section{The HKR filtration}

Let $k\rightarrow R$ be a map of $\bE_\infty$-rings.
Recall that $\THH(R/k)\we R\otimes_{R\otimes_kR}R$. There is a canonical augmentation
$\THH(R/k)\rightarrow R$ and, with respect to this augmentation,
$\L_{R/\THH(R/k)}^{\bE_\infty}\we\L_{R/k}^{\bE_\infty}[2]$.
Following F\&C.II.9, this leads to the following form of the HKR filtration on $\THH(R/k)$.

\begin{proposition}\label{prop:hkr}
    If $k\rightarrow R$ is a map of $\bE_\infty$-rings, then there is a decreasing $\bE_\infty$-filtration
    $\F^\star_\HKR\THH(R/k)$ with $$\gr^i_\HKR\THH(R/k)\we\Sym^i_R(\L_{R/k}^{\bE_\infty}[1]).$$
    If $k$ and $R$ are connective, this filtration is complete.
\end{proposition}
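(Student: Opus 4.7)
The plan is to reduce Proposition~\ref{prop:hkr} directly to the $\bE_\infty$-infinitesimal cohomology machinery established above. Using the canonical augmentation $\THH(R/k)\rightarrow R$ to view $R$ as an $\bE_\infty$-algebra over $\THH(R/k)$, I would define
$$\F^\star_\HKR\THH(R/k):=\F^\star_\H\InfEoo_{R/\THH(R/k)}.$$
By construction this is a filtered $\bE_\infty$-algebra, and the Poincar\'e lemma (Theorem~\ref{lem:intro_poincare}) identifies $\InfEoo_{R/\THH(R/k)}$ with $\THH(R/k)$, so $\F^\star_\HKR\THH(R/k)$ really is a decreasing $\bE_\infty$-filtration on $\THH(R/k)$.

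The graded pieces follow at once from Lemma~\ref{lem:graded}, which gives
$$\gr^i_\H\InfEoo_{R/\THH(R/k)}\we\Sym^i_R(\L_{R/\THH(R/k)}^{\bE_\infty}[-1]).$$
Combining this with the identification $\L_{R/\THH(R/k)}^{\bE_\infty}\we\L_{R/k}^{\bE_\infty}[2]$ recalled just before the proposition statement yields $\gr^i_\HKR\THH(R/k)\we\Sym^i_R(\L_{R/k}^{\bE_\infty}[1])$, as claimed.

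For the completeness assertion when $k$ and $R$ are connective, I would invoke Proposition~\ref{prop:1connective} applied to the augmentation $\THH(R/k)\rightarrow R$. Both source and target are connective: $R$ by hypothesis, and $\THH(R/k)=R\otimes_{R\otimes_kR}R$ as a relative tensor product of connective modules over the connective $\bE_\infty$-ring $R\otimes_kR$. It therefore suffices to show the augmentation is $1$-connective, which on $\pi_0$ reduces to checking that $\pi_0R\otimes_{\pi_0R\otimes_{\pi_0k}\pi_0R}\pi_0R\rightarrow\pi_0R$ is an isomorphism; this is immediate, since both structure maps out of $\pi_0R\otimes_{\pi_0k}\pi_0R$ are the multiplication. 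Proposition~\ref{prop:1connective} then ensures $\F^s_\HKR\THH(R/k)$ is $s$-connective, so the filtration is complete. I expect no essential obstacle; the one nontrivial ingredient is the cotangent complex shift $\L_{R/\THH(R/k)}^{\bE_\infty}\we\L_{R/k}^{\bE_\infty}[2]$, but this is already taken as given in the setup of the section.
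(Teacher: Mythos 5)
Your proposal is correct and is essentially the paper's own proof: the paper likewise defines $\F^\star_\HKR\THH(R/k)=\F^\star_\H\InfEoo_{R/\THH(R/k)}$, reads off the graded pieces from Lemma~\ref{lem:graded} together with the shift $\L_{R/\THH(R/k)}^{\bE_\infty}\we\L_{R/k}^{\bE_\infty}[2]$, and deduces completeness from Proposition~\ref{prop:1connective}. You merely spell out the $1$-connectivity of the augmentation $\THH(R/k)\rightarrow R$, which the paper leaves implicit.
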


\begin{proof}
    Define $\F^\star_\HKR\THH(R/k)=\F^\star_\H\Inf_{R/\THH(R/k)}$.
    The rest of the result follows from Proposition~\ref{prop:1connective}.
\end{proof}

\begin{motto}
    The HKR filtration on $\THH(R/k)$ is the Hodge filtration on $\Inf_{R/\THH(R/k)}$.
\end{motto}

\begin{remark}
    The construction leading to Proposition~\ref{prop:hkr} implies that the HKR
    filtration on $\THH(R/k)$ can be constructed by computing $\THH(R/\F^\star_\H\Inf_{R/k})$, the
    $\THH$ internal to $\FCAlg$ of $R$ relative to $\F^\star_\H\Inf_{R/k}$.
    That is, $$\F^\star_\HKR\THH(R/k)\we R\otimes_{R\otimes_{\F^\star_\H\InfEoo_{R/k}R}R}R.$$
    As a result this
    filtration is $S^1$-equivariant, but this construction does not naturally produce a filtered
    circle equivariant filtration in the sense of~\cite{mrt,raksit} or a synthetic circle action in
    the sense of~\cite{antieau-riggenbach,hedenlund-moulinos}.
\end{remark}

\begin{construction}[HKR spectral sequence]
    The HKR spectral sequence takes the form
    $$\E^1_{s,t}=\pi_{s+t}(\Sym^{-s}_R(\L_{R/k}^{\bE_\infty}[1]))\Rightarrow\pi_{s+t}\THH(R/k).$$
    It is convergent if $k$ and $R$ connective.
\end{construction}

\begin{remark}\label{rem:minasian1}
    A similar spectral sequence is constructed by Minasian in~\cite{minasian}.
    We will see that they agree in Remark~\ref{rem:minasian2}.
\end{remark}

\begin{remark}[Derived-$\bE_\infty$ comparison]
    Suppose that $k\rightarrow R$ is a map of connective derived commutative rings. Then, we can consider the
    classical HKR filtration $\HH_\fil(R/k)$, which depends on the derived structure. Here, we have
    $\gr^i\we\Lambda^i\L_{R/k}[i]$ where $\L_{R/k}$ is the cotangent complex of the map of derived
    commutative rings. We also have $\F^\star\THH(R/k)$ constructed as above. This is sensitive
    only to the $\bE_\infty$-structures. There is a map $\F^\star\THH(R/k)\rightarrow\HH_\fil(R/k)$
    and $\THH(R/k)\we\HH(R/k)$. Both filtrations are complete and hence the fiber $\F^\star M$
    computed in $\FMod_k$ is a filtration on zero.
\end{remark}

\section{Higher $\THH$ and $\bF_p$}\label{sec:fp}

Higher versions of $\THH$ have been studied going back to Loday and Pirashvili~\cite{pirashvili}.
Nowadays we know them as examples of factorization, or chiral, homology;
see~\cite{ayala-francis,ha}. Many computations
have been given in recent years, mostly stemming from B\"okstedt's calculation of $\THH(\bZ)$ or
$\THH(\bF_p)$. See for example~\cite{bhlprz,blprz,dlr,hhlrz,hklrz}. We observe that the HKR filtration extends to all of these theories.

\begin{construction}[HKR filtration on higher $\THH$]\label{const:higher_hkr}
    Let $k\rightarrow R$ be a map of $\bE_\infty$-rings.
    Let $\THH^{(n)}(R/k)$ be the result of applying the Bar construction $n$ times to the augmentation
    $R\otimes_kR\rightarrow R$. Thus, $\THH^{(0)}(R/k)\we R\otimes_kR$, $\THH^{(1)}(R/k)\we\THH(R/k)$,
    and so on. This construction is related to the more general topological chiral homology
    construction: $$\THH^{(n)}(R/k)\we\int_{S^n}^kR.$$ See~\cite{ayala-francis} or~\cite[Chap.~5]{ha} for more details.
    Here, the superscript $k$ in $\int^k$ denotes that we work internally to $\Mod_k$.

    Similarly, we can let $\F^\star_\HKR\THH^{(n)}(R/k)$ be the result of applying the Bar
    construction $n$ times to the augmentation $\InfEoo_{R/R\otimes_kR}\rightarrow R$ in $\FCAlg$.
    Thus, $\F^\star_\HKR\THH^{(0)}(R/k)$ is what we used to give our Quillen spectral sequence, while
    $\F^\star_\HKR\THH^{(1)}(R/k)$ is the $\bE_\infty$-analogue of the usual HKR spectral sequence in
    Hochschild homology. There is also a factorization homology interpretation:
    $$\F^\star_\HKR\THH^{(n)}(R/k)\we\int_{S^n}^{\F^\star_\H\InfEoo_{R/k}}R,$$
    where we work internally to $\FMod_{\F^\star_\H\InfEoo_{R/k}}$.
\end{construction}

\begin{remark}
    With these conventions, it makes sense also to let $\THH^{(-1)}(R/k)=k$ and
    $\F^\star_\HKR\THH^{(-1)}(R/k)=\F^\star_\H\InfEoo_{R/k}$.
\end{remark}

\begin{remark}
    More generally, if $k\rightarrow R$ is a map of $\bE_\infty$rings and $X$ is an anima, then we can consider the $\bE_\infty$-ring $X\otimes R$,
    the tensoring or copowering of $R$ by $X$ in $\CAlg_k$. We obtain an HKR-like filtration on
    $X\otimes R$ by instead considering $X\otimes R$ in $\FCAlg_{\F^\star_\H\InfEoo_{R/k}}$.
\end{remark}

\begin{lemma}\label{lem:higher_graded}
    For each integer $n\geq -1$, there is a natural identification
    $$\gr^*\THH^{(n)}(R/k)\we\Sym^*_R(\L_{R/k}[n])$$ of graded $\bE_\infty$-$R$-algebras.
\end{lemma}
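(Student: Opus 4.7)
The plan is to prove the stronger identification
\[
\F^\star_\HKR\THH^{(n)}(R/k)\we\F^\star_\H\InfEoo_{R/\THH^{(n)}(R/k)}
\]
in $\FCAlg$ by induction on $n\geq -1$, and then to extract the graded pieces from Lemma~\ref{lem:graded} together with a short cotangent complex calculation. The base cases are immediate: for $n=-1$ we have $\THH^{(-1)}(R/k)=k$ by the convention in the preceding remark, while for $n=0$ the identity holds because $\THH^{(0)}(R/k)=R\otimes_k R$ and $\F^\star_\HKR\THH^{(0)}(R/k)=\F^\star_\H\InfEoo_{R/R\otimes_k R}$ by construction.

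For the inductive step I would unwind the definition of $\F^\star_\HKR\THH^{(n+1)}(R/k)$ to produce an additional layer of Bar construction, yielding $\F^\star_\HKR\THH^{(n+1)}(R/k)\we\B_R(\F^\star_\HKR\THH^{(n)}(R/k))$. The essential input is then the compatibility
\[
\B_R\bigl(\F^\star_\H\InfEoo_{R/A}\bigr)\we\F^\star_\H\InfEoo_{R/\B_R(A)}
\]
for any augmented $\bE_\infty$-$R$-algebra $A\to R$. To prove this, I would apply the symmetric monoidal functor $\F^\star_\H\InfEoo_{-/-}$ of Proposition~\ref{prop:base_change} to the coCartesian pushout in $\Pairs(\CAlg)$ of the span $(R/R)\leftarrow (R/A)\to (R/R)$, whose two legs are the augmentation $A\to R$ on the base and the identity on the top. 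The pushout pair is $(R/\B_R(A))$, and since $\F^\star_\H\InfEoo_{R/R}\we R$ (the Hodge filtration on the identity map is trivial, as one reads off Lemma~\ref{lem:graded} using $\L^{\bE_\infty}_{R/R}\we 0$), the image of that pushout under $\F^\star_\H\InfEoo$ collapses to $R\otimes_{\F^\star_\H\InfEoo_{R/A}}R\we\F^\star_\H\InfEoo_{R/\B_R(A)}$, which is the claimed compatibility.

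Feeding the induction back through Lemma~\ref{lem:graded} gives $\gr^*\we\Sym^*_R(\L^{\bE_\infty}_{R/\THH^{(n)}(R/k)}[-1])$. A parallel induction using the transitivity triangle for $A\to\B_R(A)\to R$ shows $\L^{\bE_\infty}_{R/\B_R(A)}\we\L^{\bE_\infty}_{R/A}[1]$, since the relevant boundary map is the fold $\L^{\bE_\infty}_{R/A}\oplus\L^{\bE_\infty}_{R/A}\to\L^{\bE_\infty}_{R/A}$; starting from $\L^{\bE_\infty}_{R/R\otimes_k R}\we\L^{\bE_\infty}_{R/k}[1]$ this yields $\L^{\bE_\infty}_{R/\THH^{(n)}(R/k)}\we\L^{\bE_\infty}_{R/k}[n+1]$, and the two shifts combine to produce $\Sym^*_R(\L^{\bE_\infty}_{R/k}[n])$. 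The main obstacle is really packaged inside Proposition~\ref{prop:base_change}: once symmetric monoidality of $\F^\star_\H\InfEoo$ on $\Pairs(\CAlg)$ is available as input, the Bar--infinitesimal cohomology compatibility reduces to a single coCartesian pushout computation, and everything else is bookkeeping with transitivity triangles for the $\bE_\infty$-cotangent complex.
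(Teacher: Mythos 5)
Your argument is correct, and it is organized differently from the paper's (very terse) proof. The paper stays at the level of associated graded objects: since $\gr^*$ is symmetric monoidal and colimit-preserving, it commutes with the iterated Bar construction, so one reduces to the base case $\gr^*\InfEoo_{R/k}\we\Sym^*_R(\L^{\bE_\infty}_{R/k}[-1])$ of Lemma~\ref{lem:graded} together with the computation that the Bar construction of the free graded $\bE_\infty$-algebra on $M(1)$ is free on $M[1](1)$. You instead work at the filtered level, first proving the stronger identification $\F^\star_\HKR\THH^{(n)}(R/k)\we\F^\star_\H\InfEoo_{R/\THH^{(n)}(R/k)}$ (which reconciles Construction~\ref{const:higher_hkr} with the definition in Proposition~\ref{prop:hkr} and extends the Motto to all $n$), and then applying Lemma~\ref{lem:graded} once together with the shift $\L^{\bE_\infty}_{R/\THH^{(n)}(R/k)}\we\L^{\bE_\infty}_{R/k}[n+1]$. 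Your key compatibility $\B_R(\F^\star_\H\InfEoo_{R/A})\we\F^\star_\H\InfEoo_{R/\B_R(A)}$ is exactly of the type the paper itself extracts from Proposition~\ref{prop:base_change} in the example following it; strictly, preserving the pushout requires slightly more than symmetric monoidality for the coCartesian structure, but it follows from the observation that $\F^\star_\H\InfEoo_{-/-}\colon\Pairs(\CAlg)\to\FCAlg$ is itself a left adjoint (right adjoint $\F^\star S\mapsto(\gr^0S/\F^0S)$), hence preserves all colimits. Two cosmetic points: the identification $\F^\star_\H\InfEoo_{R/R}\we\ins^0R$ is most cleanly seen from preservation of initial objects rather than from the vanishing of the graded pieces alone (which would require completeness); and your cofiber-of-the-fold-map computation of $\L^{\bE_\infty}_{R/\B_R(A)}$ is the standard argument already invoked at the start of Section~3, so it fits the paper's conventions. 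What your route buys is the filtered statement itself and a single application of Lemma~\ref{lem:graded}; what the paper's route buys is brevity, at the cost of leaving the graded Bar computation implicit.
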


\begin{proof}
    For $n=-1$, this is the content of Lemma~\ref{lem:graded}. The general case follows by
    iteratively applying the Bar construction.
\end{proof}

For the rest of this section, higher $\THH$, the $\bE_\infty$-cotangent complex, and $\bE_\infty$-infinitesimal
cohomology are all computed relative to $\bS$ to simplify the exposition, so we omit the base in the notation.

\begin{example}[Thom spectra]
    Suppose that $A$ is a grouplike $\bE_\infty$-anima and that $f\colon A\rightarrow\mathrm{Pic}(\bS)$ is an
    $\bE_\infty$-map. Let $R=\mathrm{Th}(f)$, the Thom spectrum of $f$, which is an
    $\bE_\infty$-ring.
    A result of Blumberg--Cohen--Schlichtkrull~\cite{blumberg-cohen-schlichtkrull} identifies $\THH(R)$ with
    $R[\B A]\we\Sigma^\infty_+\B A\otimes_\bS R$ as $\bE_\infty$-$R$-algebras;
    see also~\cite{blumberg-thom,krause-nikolaus-lectures}. In fact, at heart, this equivalence
    arises from the Thom isomorphism as studied in~\cite{ando-blumberg-gepner-hopkins-rezk}.
    Indeed, we have $R\otimes_\bS R\we R[A]$. By iteratively applying the Bar
    construction to the augmentation $R[A]\rightarrow R$ arising from the canonical $R$-orientation
    of $R$, we obtain $$\THH^{(n)}(R)\we R[\B^nA]$$ for
    $n\geq 0$. (See~\cite{rasekh-stonek-valenzuela} for related results.) Now, the $\bE_\infty$-cotangent complex of Thom spectra is also understood, by work
    of Basterra--Mandell~\cite{basterra-mandell} (see also~\cite{rasekh-stonek}) which proves that
    $\L_R^{\bE_\infty}\we R\otimes_\bS\B^\infty A$, where $\B^\infty A$ is the connective spectrum
    associated to $A$. The HKR filtration on $\THH^{(n)}(R)$ thus has associated graded pieces
    given by
    $$\gr^*\THH^{(n)}(R)\we\Sym^*_R(R\otimes_\bS\B^\infty A[n])\we R\otimes_\bS\Sym^*(\B^\infty A[n]).$$
    We can reverse the logic and use the computation of $\THH^{(n)}(R)$ to determine
    the cotangent complex $\L_R$ by stabilization. 
\end{example}

Thanks to~\cite[Prop.~3.3.4]{raksit},
there is a symmetric monoidal shearing autoequivalence
$[2*]\colon\GrMod_{\bZ}\rightarrow\GrMod_{\bZ}$. More generally, we have a symmetric monoidal
functor $[2a*]$ obtained by shearing $a$ times.

\begin{lemma}\label{lem:shear}
    Let $R$ be an $\bE_\infty$-$\bZ$-algebra.
    Shearing induces equivalences
    $$[2a*]\colon\gr^*\THH^{(-1)}(R)\we\gr^*\THH^{(-1+a)}(R)\quad\text{and}\quad
    [2a*]\colon\gr^*\THH^{(0)}(R)\we\gr^*\THH^{(a)}(R)$$
    for each integer $a\geq 0$.
\end{lemma}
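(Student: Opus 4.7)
The plan is to reduce the statement to a matching of symmetric algebras via Lemma~\ref{lem:higher_graded}, and then exploit the symmetric monoidality of the shearing to identify the two sides.

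First, I would apply Lemma~\ref{lem:higher_graded} to both sides of each claimed equivalence, identifying $\gr^*\THH^{(n)}(R)\we\Sym^*_R(\L_R^{\bE_\infty}[n])$ as a graded $\bE_\infty$-$R$-algebra in which $\L_R^{\bE_\infty}[n]$ sits in weight~$1$. The task thereby reduces to producing natural equivalences
$$[2a*]\,\Sym^*_R(\L_R^{\bE_\infty}[n])\we\Sym^*_R(\L_R^{\bE_\infty}[n+a])$$
of graded $\bE_\infty$-$R$-algebras, and then specializing to $n=-1$ and $n=0$.

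The key input is that each $[2a*]$ is a symmetric monoidal autoequivalence of $\GrMod_\bZ$. Base changing along $\bZ\rightarrow R$, it induces a symmetric monoidal autoequivalence of $\GrMod_R$ which fixes the weight-$0$ part, hence preserves $R$-linearity, and therefore commutes with the free graded $\bE_\infty$-$R$-algebra functor $\Sym^*_R$. This yields
$$[2a*]\,\Sym^*_R(\L_R^{\bE_\infty}[n])\we\Sym^*_R\bigl([2a*]\L_R^{\bE_\infty}[n]\bigr),$$
so the whole computation reduces to what happens on a single weight-$1$ generator. By the definition of the shearing, $[2a*]$ acts on the weight-$1$ object $\L_R^{\bE_\infty}[n]$ by an honest shift of spectra, converting it into $\L_R^{\bE_\infty}[n+a]$, still in weight~$1$.

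The main obstacle is simply the bookkeeping: one has to check that the shearing on $\GrMod_\bZ$ descends along $\bZ\rightarrow R$ to a symmetric monoidal autoequivalence of $\GrMod_R$ that intertwines $\Sym^*_R$ and preserves the $R$-algebra structure on $\gr^*\THH^{(n)}(R)$, and to fix conventions so that the shift produced on the generator matches the correct degree of $\L_R^{\bE_\infty}$. Conceptually, once shearing is known to commute with $\Sym^*_R$, the lemma is formal.
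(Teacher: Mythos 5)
Your overall route is exactly the paper's: the proof in the text is the one-line observation that the claim ``follows from Lemma~\ref{lem:higher_graded} and shearing,'' and your proposal is a correct expansion of that outline --- identify both sides as free graded $\bE_\infty$-$R$-algebras on a weight-$1$ generator, note that $[2a*]$ fixes weight $0$ and hence descends to a symmetric monoidal autoequivalence of $\GrMod_R$, and use symmetric monoidality to commute the shear past $\Sym^*_R$ so that everything reduces to the weight-$1$ generator.

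There is, however, a concrete bookkeeping error in the last step. By the paper's convention, $[2a*]$ is the result of shearing $a$ times, each shear moving weight $i$ by $[2i]$; so $[2a*]$ shifts the weight-$1$ piece by $[2a]$, not by $[a]$. Hence $[2a*]\Sym^*_R(\L_R^{\bE_\infty}[n])\we\Sym^*_R(\L_R^{\bE_\infty}[n+2a])$, i.e.\ $[2a*]\gr^*\THH^{(n)}(R)\we\gr^*\THH^{(n+2a)}(R)$, and the two families should read $\gr^*\THH^{(-1+2a)}$ and $\gr^*\THH^{(2a)}$ (which is also what the surrounding discussion requires: a single shear relates $\InfhatEoo_{\bF_p}=\THH^{(-1)}$ to $\THH^{(1)}(\bF_p)$, and $\THH^{(0)}$ to $\THH^{(2)}$, and the remark that only \emph{half}-shearing lets one move by a single step at $p=2$ confirms the parity dichotomy). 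Your claim that $[2a*]$ converts $\L_R^{\bE_\infty}[n]$ into $\L_R^{\bE_\infty}[n+a]$ reproduces the indices printed in the statement but contradicts the definition of $[2a*]$; you should correct the shift on the generator to $2a$ and adjust the target indices accordingly.
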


\begin{proof}
    The claim follows from Lemma~\ref{lem:higher_graded} and shearing.
\end{proof}

Specializing to $R=\bF_p$, 
Lemma~\ref{lem:shear} has an interesting consequence. There are convergent spectral sequences computing $\pi_*\bS_p$
and $\pi_*\THH(\bF_p)$ with the same $\bE^1$-page, up to a shear. Similarly, there are convergent
spectral sequences computing $\pi_*(\bF_p\otimes_\bS\bF_p)$, i.e., the dual Steenrod algebra, and
$\pi_*\THH^{(2)}(\bF_p)$ with the same $\bE_1$-page, up to a shear. Note that
$\THH^{(2)}(\bF_p)\we\bF_p\oplus\bF_p[3]$. In both cases, the first abutment is large and
interesting while the second is rather small.

\begin{remark}
    When $p=2$, one has that $\tfrac{1}{2}$-shearing is also symmetric monoidal by a variant
    of~\cite[Prop.~3.3.4]{raksit}. Thus, in that case,
    the $\bE^1$-pages of the HKR-spectral sequences for $\THH^{(n)}(\bF_2)$ agree (up to shearing)
    for all $n\geq -1$.
\end{remark}

Lemma~\ref{lem:shear} can be used to glean some information about the
structure of the spectral sequences.
We display in Figures~\ref{fig:e1hodgess} and~\ref{fig:e1hkrss}
the $\E^1$-pages for $\F^\star_\H\InfEoo_{\bF_p}$ and $\F^\star_\HKR\THH(\bF_p)$ at $p=2$.
It is easy to see that $\pi_0\L_{\bF_p}^{\bE_\infty}=0$ and that
$\pi_1\L_{\bF_p}^{\bE_\infty}\iso\bF_p$.
Using B\"okstedt's calculation~\cite{bokstedt} that $\pi_*\THH(\bF_p)\iso\bF_p[b]$ where $|b|=2$, it follows that 
the multiplicative generators in the HKR spectral sequence on the slope $2$ diagonal are all
permanent. Everything below that diagonal must die before the $\E^\infty$-page. This implies that
$\pi_i\L_{\bF_p}^{\bE_\infty}=0$ for $i=2,3,4$. Hence, in low degrees,
$\pi_*\Sym^i_{\bF_p}(\L_{\bF_p}^{\bE_\infty}[-1])$ is isomorphic to $\H_*(\B\Sigma_i,\bF_p)$.
This justifies the zeros in the Hodge spectral sequence.

The $0$th column in the Hodge spectral sequence accumulates to $\pi_0\bS_p\iso\bZ_p$. If $p=2$, the $1$
column must have only a single $\bF_2$ surviving on the $\E^\infty$-page. Since
$\H_1(\B\Sigma_2,\bF_2)\iso\bF_2$ cannot support a differential or be hit, it detects the Hopf
map $\eta\in\pi_1\bS_2$. In particular, the Hopf map is detected in filtration weight $2$.

By using that $\THH^{(2)}(\bF_p)$ is $\bF_p\oplus\bF_p[3]$, one can find further that
$\pi_i\L_{\bF_2}^{\bE_\infty}\iso\bF_2$ for $i=5,6,7$.

In fact, the cotangent complex $\L_{\bF_p}^{\bE_\infty}$ is known in terms of Steenrod operations thanks to work of
Lazarev~\cite[Thm.~6.5]{lazarev} who credits unpublished work of Kriz.
Moreover, the homology of the symmetric groups is completely understood in terms of Dyer--Lashof
operations; see for example~\cite[Chap.~VI]{adem-milgram}. It follows that the full $\E^1$-pages of the Hodge and HKR spectral sequences for
$\bF_p$ could be written down. They are rather large.

\begin{sseqdata}[name = EooInfFp, Adams grading, classes = {draw = none } ]
    \foreach \y in {0,...,5} {
        \class["\bullet"](0,\y)
    }

    \foreach \x in {1,2,3} {
        \class["0"](\x,1)
    }

    \foreach \x in {4,5,6} {
        \class["\star"](\x,1)
    }

    \class["\bullet"](1,2)
    \class["\circ"](2,2)

    \foreach \y in {3,4,5} {
        \class["\circ"](1,\y)
    }

    \d1(2,2)

\end{sseqdata}

\begin{figure}[h]
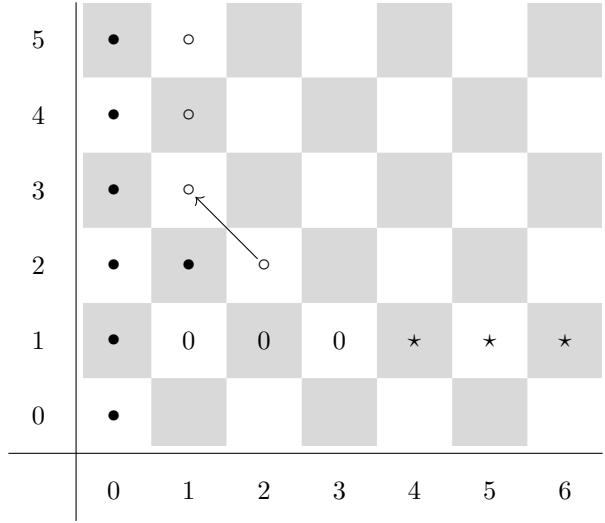

    \centering
    \printpage[ name = EooInfFp, grid=chess, page =
    0,x range = {0}{6},y range = {0}{5}]
    \caption{The $\bE^1$-page of the spectral sequence associated to
    $\F^\star_\H\InfhatEoo_{\bF_2/\bS}$ in Adams grading. The terms labeled $0$ are known to be zero. The bullets
    $\bullet$ are copies of $\bF_2$ which survive to the $\E^\infty$-page. The circles $\circ$ are
    copies of $\bF_2$ which do not survive to the $\E^\infty$-page. The stars $\star$ are copies of
    $\bF_2$ which might or might not support differentials. Undecorated boxes on or above
    the $1$-line are unspecified $\bF_2$-vector spaces.}
    \label{fig:e1hodgess}
\end{figure}

\begin{sseqdata}[name = HKRFp, Adams grading, classes = {draw = none } ]
    \class["\bullet"](0,0)
    \class["\bullet"](2,1)
    \class["\bullet"](4,2)
    \class["\circ"](5,2)
    \class["\bullet"](6,3)
    \class["\circ"](6,1)
    \d1(6,1)
    \class["\circ"](7,1)
    \class["\circ"](6,2)
    \d1(7,1)
    \class["\circ"](7,2)
    \class["\circ"](7,3)
    \class["\circ"](8,1)
    \d1(8,1)
    \foreach \x in {3,4,5} {
        \class["0"](\x,1)
    }
\end{sseqdata}

\begin{figure}[h]
    \centering
    \printpage[ name = HKRFp, grid=chess, page = 0,x range = {0}{7},y range = {0}{3}]
    \caption{The $\bE^1$-page of the spectral sequence associated to
    $\F^\star_\HKR\THH(\bF_2)$ in Adams grading.}
    \label{fig:e1hkrss}
\end{figure}

\begin{remark}
    Pirashvili gives a decomposition of higher $\HH^{(n)}(R/\bQ)$ in the case of
    commutative $\bQ$-algebras and notes the homotopy groups depend (up to a shift) only on the parity of $n$.
    This follows from the decomposition of the HKR filtration in characteristic $0$ (see~\cite{loday})
    followed by iteratively applying Bar constructions to the augmentation
    $\HH(R/\bQ)\rightarrow R$.
    In mixed characteristic, we obtain non-split filtrations with the same
    relationship on associated graded pieces thanks to Lemma~\ref{lem:shear}.
    Related work is carried out by Richter--Ziegenhagen in~\cite{richter-ziegenhagen} in the case
    of $\bE_n$-algebras. We expect this to be related to the material in Section~\ref{sec:en}.
\end{remark}

In the rest of this section we note that instead of the $\bE_\infty$ filtrations on $\THH^{(n)}(R)$
constructed above we can, when $R$ is a commutative ring, use higher $\THH$ variants of the motivic filtration
constructed on $\THH(R;\bZ_p)$ in~\cite{bms2}. We expect that eventually the theory of synthetic
$\bE_\infty$-rings and synthetic $\THH$ being worked out by Devalapurkar--Hahn--Raksit--Senger
should provide a direct bridge between the $\bE_\infty$ and motivic filtrations.

\begin{construction}[Motivic filtration on higher $\THH$]
    Construction~\ref{const:higher_hkr} applies to all maps of $\bE_\infty$-rings. We can instead
    restrict to $\THH(R)=\THH(R/\bS)$ when $R$ is a commutative ring and apply these ideas to the
    motivic filtration constructed by Bhatt--Morrow--Scholze in the $p$-adic case in~\cite{bms2} and in general
    in~\cite{bhatt-lurie-apc,hrw,morin}. Denote this filtration by
    $\F^\star_\mot\THH^{(1)}(R)=\F^\star_\mot\THH(R)$. Taking the Bar construction $(n-1)$ times
    against the augmentation $\F^\star_\mot\THH^{(1)}(R)\rightarrow\gr^0_\mot\THH^{(1)}(R)\we R$ we
    obtain a filtration $\F^\star_\mot\THH^{(n)}(R)$.
\end{construction}

\begin{warning}
    In the $p$-complete case,
    as $\gr^i_\mot\THH(R;\bZ_p)\we\gr^i_\N\Prism_R$, it is tempting to guess that we can compute
    $\F^\star_\mot\THH^{(n)}(R;\bZ_p)$ as in Construction~\ref{const:higher_hkr} by taking factorization
    homology internal to the $\infty$-category of filtered $p$-complete $\N^{\geq\star}\Prism_R$-modules of the
    map $\N^{\geq\star}\Prism_R\rightarrow\gr^0_\N\Prism_R\we R$. However, if this were the case,
    then $\THH^{(n)}(R)$ would in particular admit the structure of a connective derived commutative ring. The
    case of $R=\bF_p$ and $n=1$ shows that this is not the case. By B\"okstedt~\cite{bokstedt} we
    have that $\pi_*\THH(\bF_p)\iso\bF_p[b]$, a polynomial ring on a degree $2$ generator. But,
    degree $2$ homotopy classes of connective derived commutative admit divided powers, so this is
    impossible.
\end{warning}

\begin{example}\label{ex:thh2_fp}
    We have $\F^\star_\mot\THH^{(1)}(\bF_p)\we\tau_{\geq 2\star}\THH^{(1)}(\bF_p)$. Thus,
    $\F^\star_\mot\THH^{(2)}(\bF_p)$ is given by the two-step multiplicative filtration on
    $\THH^{(2)}(\bF_p)$ with $\gr^1\we\bF_p[3]$ and $\gr^0\we\bF_p$. Then, we have that
    $\pi_*\THH^{(3)}(\bF_p)$ is a free divided power algebra on a degree $4$ class $\sigma^2 b$ and
    $\F^\star_\mot\THH^{(3)}(\bF_p)\we\tau_{\geq 4\star}\THH^{(3)}(\bF_p)$.
\end{example}

\begin{remark}
    The motivic filtration on $\THH^{(1)}(\bF_p)$ agrees with the even filtration of~\cite{hrw}. This is the case
    for all quasisyntomic rings. By its construction, $\gr^i$ of the even filtration is
    $2i$-coconnective. Thus, the motivic filtration on $\THH^{(2)}(\bF_p)$ examined in
    Example~\ref{ex:thh2_fp} is not the even filtration.
\end{remark}

\begin{remark}
    As in~\cite[Sec.~11]{bms2},
    we can make the motivic filtration work for higher $\THH$ over certain other base
    $\bE_\infty$-rings. For example, consider $\bS[z]\rightarrow\bZ_p$ where $z\mapsto p$.
    Then, $\pi_*\THH(\bZ_p/\bS[z];\bZ_p)\iso\bZ_p[b]$, where $|b|=2$. The motivic filtration here is the
    double speed Postnikov filtration and we can Bar this to obtain motivic filtrations on
    $\THH^{(n)}(\bZ_p/\bS[z];\bZ_p)$, the $p$-completion of $\THH^{(n)}(\bZ_p/\bS[z])$. This
    technique can further be combined with the descent method
    of Krause--Nikolaus~\cite{krause-nikolaus} to obtain new computations of $\THH^{(n)}(\Oscr_K;\bZ_p)$ for rings
    of integers $\Oscr_K$ in $p$-adic fields $K$. For example, Krause and Nikolaus compute
    $\pi_*\THH^{(1)}(\bZ_p/\bS[z];\bZ_p)$ as the homology of a dga
    $\bZ_p[b]\otimes_{\bZ_p}\Lambda_{\bZ_p}(y)$
    where $|b|=2$, $|y|=1$, and $\d(b)=y$. Thus, $\d(b^n)=nb^{n-1}y$ which gives that
    $\pi_{2n-1}\THH^{(1)}(\bZ_p/\bS[z];\bZ_p)\iso\bZ_p/n$ for $n\geq 1$ with the rest of the positive
    groups vanishing. By taking the Bar construction of this dga we obtain $\Lambda_{\bZ_p}(\sigma
    b)\otimes_{\bZ_p}\Gamma_{\bZ_p}(\sigma y)$ with $\d(\sigma b)=\sigma y$ which computes
    $\pi_*\THH^{(2)}(\bZ_p;\bZ_p)$. Now, we have $\d(\sigma b (\sigma y)^{[n]})=(\sigma y)(\sigma
    y)^{[n]}=(n+1)(\sigma y)^{[n+1]}$. Thus, we recover the calculation
    of~\cite{dundas-lindenstrauss-richter} that $\pi_*\THH(\bZ_p;\bZ_p)$ is
    $\Gamma_{\bZ_p}(z)/(z)$, the free divided power algebra on a degree $2$ element modulo the
    ideal generated by that element.
\end{remark}

\section{An $\bE_\infty$-version of de Rham cohomology}

In the case of derived commutative rings, the existence of de Rham cohomology is intimately linked
to the notion of shearing. Over $\bS$, shearing is not symmetric monoidal. But, it can be used to
equip $\GrMod_\bS$ with an exotic symmetric monoidal structure. Let $k$ be an $\bE_\infty$-ring.
Let $$[-2*]\colon\GrMod_k\rightarrow\GrMod_k$$ be the shearing (downward) equivalence. Using it, we can
transport the usual symmetric monoidal structure $\GrMod_k^{\otimes_\D}$ on $\GrMod_k$ to a new
symmetric monoidal structure $\GrMod_k^{\otimes_\S}$ on $\GrMod_k$. 
This structure is uniquely determined by the requirement that shearing admit an enhancement to a
symmetric monoidal equivalence $$[-2*]\colon\GrMod^{\otimes_\D}\rightarrow\GrMod^{\otimes_\S}.$$
Note that $[-2*]$ is in fact $\bE_2$-monoidal, so the underlying $\bE_2$-monoidal structure of
$\GrMod^{\otimes_\S}$ is equivalent to that of $\GrMod^{\otimes_\D}$.

Now, consider $\bT_\gr^\vee\we k(0)\oplus k(-1)[-1]$, the trivial square-zero extension of $k(0)$ by
$k(-1)[-1]$ as an $\bE_\infty$-ring in $\GrMod_k^{\otimes_\D}$. As with any trivial square-zero
extension, $\bT_\gr^\vee$ admits the structure of a bicommutative bialgebra, i.e., it is naturally a
cocommutative coalgebra in $\CAlg(\GrMod_k^{\otimes_\D})$. (This graded circle exists whether not a
filtered circle exists.) We can shear down to obtain
$\bD_-^\vee=\bT_\gr^\vee[-2*]$ in $\CAlg(\GrMod_k^{\otimes_\S})$.

\begin{definition}
    Let $\widehat{\FMod}_k^{\otimes_\S}=\cMod_{\bD_-^\vee}(\GrMod_k^{\otimes_\S})$.
    The underlying $\infty$-category of $\widehat{\FMod}_k^{\otimes_\S}$ is $\widehat{\FMod}_k$.
    The symmetric monoidal structure is a twist of the usual symmetric monoidal structure on
    complete filtered $k$-modules in the sense that it agrees up to $\bE_2$.
\end{definition}

\begin{lemma}
    There is a symmetric monoidal structure $\FMod_k^{\otimes_\S}$ on $\FMod_\S$ which is
    determined by the criteria that the completion functor
    $\FMod_k^{\otimes_\S}\rightarrow\widehat{\FMod}_k^{\otimes_\S}$ be symmetric monoidal and the inclusion
    functor $\FMod_k^{\otimes_\S}\leftarrow\widehat{\FMod}_k^{\otimes_\S}$ is symmetric monoidal on
    compact objects of $\FMod_k$ (which are all complete). 
\end{lemma}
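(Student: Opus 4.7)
The plan is to construct $\FMod_k^{\otimes_\S}$ as the Ind-extension of the sheared symmetric monoidal structure restricted to compact objects. The starting observation — recorded in the statement itself — is that $\FMod_k$ is compactly generated and every compact filtered $k$-module is automatically complete. Indeed, the compact generators $\ins^n k$ for $n \in \bZ$ are zero in all but one weight and hence trivially complete, and the full subcategory of complete filtered modules is closed under finite (co)limits and retracts in the stable setting. Consequently $\FMod_k^\omega = \widehat{\FMod}_k^\omega$ and $\FMod_k \we \Ind(\FMod_k^\omega)$.

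Next I would show that the sheared symmetric monoidal structure on $\widehat{\FMod}_k^{\otimes_\S}$ restricts to a small symmetric monoidal structure on $\widehat{\FMod}_k^\omega$. This reduces to checking that the sheared tensor product of two compact-complete filtered modules is compact. Because $\widehat{\FMod}_k^{\otimes_\S}$ is the category of $\bD_-^\vee$-comodules in $\GrMod_k^{\otimes_\S}$, and because shearing is $\bE_2$-monoidal (as noted just before the statement), the underlying tensor bifunctor on $\widehat{\FMod}_k$ agrees up to equivalence with the completed Day convolution; since that bifunctor preserves compactness (e.g.\ $\ins^n k \otimes \ins^m k \we \ins^{n+m} k$), the same holds for $\otimes_\S$.

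With $\widehat{\FMod}_k^{\omega, \otimes_\S}$ in hand, I would define $\FMod_k^{\otimes_\S} := \Ind(\widehat{\FMod}_k^{\omega, \otimes_\S})$ via the canonical Ind-extension of symmetric monoidal structures~\cite{ha}. This is a presentable symmetric monoidal $\infty$-category whose underlying $\infty$-category is $\FMod_k$ and whose tensor product commutes with colimits in each variable. The inclusion $\widehat{\FMod}_k^{\otimes_\S} \hookrightarrow \FMod_k^{\otimes_\S}$ is symmetric monoidal on compact objects by construction, giving the second criterion. For the first, the completion functor $L \colon \FMod_k \to \widehat{\FMod}_k$ is the identity on compacts and preserves all colimits; writing $X \we \colim_\alpha X_\alpha$ and $Y \we \colim_\beta Y_\beta$ as filtered colimits of compacts, both $L(X \otimes_\S Y)$ and $L(X) \otimes_\S L(Y)$ compute the completed colimit of $X_\alpha \otimes_\S Y_\beta$ in $\widehat{\FMod}_k^{\otimes_\S}$. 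Uniqueness is by the same argument: the two criteria, together with colimit-preservation of tensor products in any presentable symmetric monoidal structure, force any other candidate structure to coincide with the Ind-extension.

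The main obstacle is the compactness-preservation check in the second step; once that is settled via the $\bE_2$-comparison with Day convolution, the remaining verifications are formal manipulations of Ind-extensions and presentable symmetric monoidal localizations.
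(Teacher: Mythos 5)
Your proof follows the same route as the paper's: observe that the compact objects of $\FMod_k$ are complete and closed under $\otimes_\S$ inside $\widehat{\FMod}_k$, then Ind-extend the restricted symmetric monoidal structure to $\Ind(\FMod_k^\omega)\we\FMod_k$; you simply supply the verifications the paper leaves to the reader. One minor correction: in the paper's conventions $\ins^n k$ is the filtration that is $k$ in all weights $\leq n$ and $0$ above, not concentrated in a single weight — but it is still eventually zero, hence complete, so your conclusion (and the rest of the argument) stands.
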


\begin{proof}
    The compact objects $\FMod_k^\omega\subseteq\FMod_k$ are all complete and are closed under
    $\otimes_\S$ inside $\FMod_k^\wedge$. Thus, we obtain a symmetric monoidal structure on
    $\Ind(\FMod_k^\omega)\we\FMod_k$. We leave it to the reader to check that it has the desired
    properties.
\end{proof}

The following definition was suggested to us by Arpon Raksit.

\begin{definition}[$\bE_\infty$-de Rham cohomology]
    We let $$\F^\star\dR^{\bE_\infty}_{-/k}\colon\CAlg_k\rightarrow\CAlg(\FMod_k^{\otimes_\S})$$ be
    the left adjoint of the $\gr^0$ functor. This is Hodge-filtered de Rham cohomology of
    $\bE_\infty$-$k$-algebras. Similarly, let
    $\F^\star\dRhat^{\bE_\infty}_{-/k}\colon\CAlg_k\rightarrow\CAlg(\widehat{\FMod}_k^{\otimes_\S})$
    be the left adjoint of the $\gr^0$ functor. It defines Hodge-complete Hodge-filtered de Rham
    cohomology of $\bE_\infty$-rings.
\end{definition}

\begin{remark}
    When a filtered circle exists over $k$ in the sense that there is a two step filtration on
    $k[S^1]$ with graded pieces $k$ in weight $0$ and $k$ in weight $1$ and where moreover this is
    an $\bE_\infty$-algebra object in coalgebras over $k$, then one can construct the HKR
    filtration as in~\cite{raksit}, take associated graded pieces, and shear down to recover
    $\bE_\infty$-de Rham cohomology.
\end{remark}

\begin{remark}
    For an $\bE_\infty$-$R$-algebra $R$,
    we have $\gr^*_\H\dR^{\bE_\infty}_{R/k}\we\Sym^*_R(\L_{R/k}^{\bE_\infty}[1])[-2*]$.
    The descent results for Hodge-completed $\bE_\infty$-infinitesimal cohomology are available for
    Hodge-completed $\bE_\infty$-de Rham cohomology. If $k$ and $R$ are connective, then we can use descent
    along $k\rightarrow\pi_0k$ to get into a situation where shearing is symmetric monoidal and
    hence the $\bE_\infty$-infinitesimal and $\bE_\infty$-de Rham theories agree. This descent
    strategy can be used to establish properties of $\bE_\infty$-de Rham cohomology. However, we have
    no particular applications for these at the moment.
\end{remark}

\section{$\bE_n$-infinitesimal cohomology and HKR}\label{sec:en}

There is a natural variant of $\bE_\infty$-infinitesimal cohomology.

\begin{definition}\label{def:en}
    Fix an $\bE_\infty$-ring $k$ and an integer $n\geq 0$.
    Let $\F\Alg^{\bE_n}_k$ be the $\infty$-category of filtered $\bE_n$-algebras over $k$, i.e.,
    the $\infty$-category of $\bE_n$-algebras in $\FMod_k$. This admits a limit and colimit-preserving
    functor $\Alg^{\bE_n}_k\leftarrow\F\Alg_k^{\bE_n}\colon\gr^0$. The left adjoint
    $$\F^\star_\H\InfEn_{-/k}\colon\Alg^{\bE_n}_k\rightarrow\F\Alg_k^{\bE_n}$$
    is called the $\bE_n$-infinitesimal cohomology functor.
    Let $\F^\star_\H\InfhatEn_{-/k}$ be its completion.
\end{definition}

An $\bE_n$-algebra $R$ has an $\bE_n$-monoidal $\infty$-category $\Mod_R^{\bE_n}$ of $\bE_n$-modules. For
example, when $n=1$, this is the $\infty$-category of $(R,R)$-bimodules. When $n=\infty$,
$\Mod_R^{\bE_\infty}\we\LMod_R$. We also have $\GrMod_R^{\bE_n}$, an $\bE_n$-monoidal category of
graded $\bE_n$-modules over $R$.

\begin{lemma}
    Given $R\in\Alg^{\bE_n}_k$, there is a natural equivalence between $\gr^*_\H\InfEn_{R/k}$ and the
    free $\bE_n$-algebra in $\GrMod^{\bE_n}_R$ on $\L_{R/k}^{\bE_n}[-1](1)$.
\end{lemma}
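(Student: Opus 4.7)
The plan is to mirror the proof of Lemma~\ref{lem:graded} in the $\bE_n$-setting.

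First I would record the formal properties of both sides as functors of $R$. The functor $\F^\star_\H\InfEn_{-/k}\colon\Alg^{\bE_n}_k\to\F\Alg^{\bE_n}_k$ is a colimit-preserving left adjoint and admits a symmetric monoidal structure via the $\bE_n$-analogue of Proposition~\ref{prop:base_change}. The associated-graded functor $\gr^*\colon\F\Alg^{\bE_n}_k\to\Gr\Alg^{\bE_n}_k$ is itself colimit-preserving and symmetric monoidal for the Day-convolution structures, so the composite $\gr^*_\H\InfEn_{-/k}$ is a colimit-preserving, symmetric monoidal functor $\Alg^{\bE_n}_k\to\Gr\Alg^{\bE_n}_k$. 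The candidate assignment $R\mapsto\mathrm{Free}^{\bE_n}_{\Gr R}(\L^{\bE_n}_{R/k}[-1](1))$ has the same formal properties by base change for the $\bE_n$-cotangent complex and the colimit-preservation of the free $\bE_n$-algebra functor.

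Next, I would construct a natural comparison map. The two-step truncation $\F^{\geq 0}_\H\InfEn_{R/k}/\F^{\geq 2}_\H\InfEn_{R/k}$ is an $\bE_n$-$k$-algebra which in weight $0$ is a square-zero extension of $R$ by $\gr^1_\H\InfEn_{R/k}$; by the defining universal property of the $\bE_n$-cotangent complex this extension is classified by a canonical $\bE_n$-$R$-module map $\L^{\bE_n}_{R/k}[-1]\to\gr^1_\H\InfEn_{R/k}$, and the universal property of the free $\bE_n$-algebra in $\Gr\Mod^{\bE_n}_R$ then promotes this to a map
\[
\mathrm{Free}^{\bE_n}_{\Gr R}\bigl(\L^{\bE_n}_{R/k}[-1](1)\bigr)\longrightarrow\gr^*_\H\InfEn_{R/k}
\]
of graded $\bE_n$-$R$-algebras.

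Finally, I would check that this map is an equivalence by reducing to generators. Since both sides preserve sifted colimits in $R$ and $\Alg^{\bE_n}_k$ is generated under sifted colimits by free $\bE_n$-$k$-algebras on finite free $k$-modules, it suffices to verify the claim for $R=\mathrm{Free}^{\bE_n}_k(M)$. In that case $\L^{\bE_n}_{R/k}$ identifies with the free $\bE_n$-$R$-module on $M$, and $\F^\star_\H\InfEn_{R/k}$ is the value at $M$ of a symmetric monoidal colimit-preserving functor $\Mod_k\to\F\Alg^{\bE_n}_k$ whose behaviour on generators is determined by the universal property; matching the explicit description of the right-hand side then identifies the comparison map with an equivalence. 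The main obstacle will be precisely this final verification: one must handle the $\bE_n$-monoidal structures on $\Gr\Mod^{\bE_n}_R$ and their compatibility with base change along $\bE_n$-algebra maps via the operadic Day convolution and free-$\bE_n$-algebra machinery of~\cite[Chap.~5]{ha}, a nontrivial elaboration of the symmetric monoidal case underlying Lemma~\ref{lem:graded}. Once these coherence issues are settled, the reduction and matching are formal.
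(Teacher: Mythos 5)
Your proposal follows essentially the same route as the paper: identify $\gr^1$ via the universal $\bE_n$-square-zero extension property of the weight-$[0,1]$ truncation to get the comparison map, then reduce by colimit-preservation to free $\bE_n$-algebras $R=\Fr^{\bE_n}_k(X)$, where both sides are identified explicitly using that $\L^{\bE_n}_{R/k}$ is the free $\bE_n$-$R$-module on $X$. The paper additionally records the identification $\Alg^{\bE_n}(\GrMod_R^{\bE_n})\we(\Alg^{\bE_n}(\GrMod_k))_{R(0)/}$ and cites \cite[Lem.~2.12]{francis} to settle the coherence point you flag as the main obstacle, but the argument is the same.
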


\begin{proof}
    The proof is the same as in the $\bE_\infty$-case up to the following observations. Firstly,
    $\Alg^{\bE_n}(\GrMod_R^{\bE_n})\we(\Alg^{\bE_n}\GrMod_k))_{R(0)/}$.
    Restricting to graded pieces $0$ and $1$, we find that $\F^{[0,1]}_\H\InfEn_{R/k}$ is the
    universal $\bE_n$-square-zero extension of $R$ and hence that
    $\gr^1_\H\InfEn_{R/k}\we\L_{R/k}^{\bE_n}[-1]$. This gives a natural map from the free
    $\bE_n$-algebra in $\GrMod^{\bE_n}$ on $\L_{R/k}^{\bE_n}[-1](1)$ to $\gr^*_\H\InfEn_{R/k}$.
    To prove that it's an equivalence, it is enough to prove it on free $\bE_n$-algebras. Let
    $R=\Fr^{\bE_n}_k(X)$ be the free $\bE_n$-algebra on an $k$-module $X$. Then,
    $\gr^*_\H\InfEn_{R/k}$ is equivalent to the free graded $\bE_n$-algebra on $X(0)\oplus
    X[-1](1)$. This in turn is the free graded $\bE_n$-algebra under $R(0)$ on the free
    $\bE_n$-module over $R$ on $X[-1](1)$. This free $\bE_n$-module over $R$ on $X[-1](1)$ is
    precisely $\L_{R/k}^{\bE_n}[-1](1)$ (see~\cite[Lem.~2.12]{francis}). Thus, both sides have the same form and the induced map is
    an equivalence in weights $0$ and $1$. Since the algebras are free, it is an equivalence.
\end{proof}

\begin{remark}[The $\bE_n$-cotangent complex]
    Given an $\bE_n$-algebra $R$, by~\cite[Thm.~2.26]{francis} or~\cite[Thm.~7.3.5.1]{ha}, there is a canonical fiber sequence $$\int_{S^{n-1}}R\rightarrow
    R\rightarrow\L_{R/k}^{\bE_n}[n]$$ in $\Mod_R^{\bE_n}$.
\end{remark}

\begin{example}
    Since $\bF_p$ is $\bE_\infty$, we can compute $\int_{S^{2}}\bF_p$ as
    $\bF_p\otimes_{\THH(\bF_p)}\bF_p$. By B\"okstedt's calculation~\cite{bokstedt}, we have
    $\pi_*\THH(\bF_p)\iso\bF_p[b]$, the polynomial ring on a degree $2$ generator. Thus,
    $\int_{S^2}\bF_p\we\bF_p\oplus\bF_p[3]$. It follows that $\L_{\bF_p}^{\bE_3}\we\bF_p[1]$.
\end{example}

\begin{example}[$n=1$]\label{ex:e1}
    Suppose that $k$ is an $\bE_\infty$-ring and $R$ is an $\bE_1$-$k$-algebra, i.e., and
    $\bE_1$-algebra in $\LMod_k$. There is a cobar construction $R^\bullet$ which is a cosimplicial
    left $k$-module of the form $$R\stack{3}R\otimes_kR\stack{5}R\otimes_kR\otimes_kR\cdots.$$
    If we let $I=\fib(k\rightarrow R)$ be the ideal associated to $R$, then by~\cite[Prop.~2.14]{MNN17} there are natural fiber
    sequences $I^{\otimes (n+1)}\rightarrow k\rightarrow\Tot^n(R^\bullet)$.
    It follows from the monoidal version of the (cosimplicial, augmented) Dold--Kan correspondence presented
    in~\cite[Prop.~1.5]{gammage-hilburn-mazelgee}
    that the induced tower $$\cdots\rightarrow I^{\otimes 3}\rightarrow I^{\otimes
    2}\rightarrow I\rightarrow k$$ is equivalent to $\F^\star_\H\Inf^{\bE_1}_{R/k}$. This example was also
    noted in~\cite{lawson-video}.
\end{example}

\begin{remark}
    Burklund~\cite{burklund-moore} has constructed $\bE_n$-algebra structures on $\bS/p^r$ for
    suitable $r$. When $n=1$ we see from Example~\ref{ex:e1} that $\bS/p^r$ admits an
    $\bE_1$-algebra structure if and only the $p^r$-adic filtration on $\bS$ admits an
    $\bE_1$-structure.
\end{remark}

We are indebted to Tyler Lawson for the details in the following example.

\begin{example}[Comparison to Hill--Lawson]\label{ex:hill_lawson}
    By the previous example,
    the $\bE_3$-cotangent complex of $\bF_p$ is $\bF_p[1]$ as an $\bE_3$-module over $\bF_p$.
    However, this $\bE_3$-module is central in the sense that it is in the image of the
    $\bE_3$-monoidal forgetful functor $\Mod_{\bF_p}^{\bE_3}\leftarrow\LMod_{\bF_p}$ (produced
    implicitly in~\cite[Rem.~5.1.3.3]{ha}). This forgetful functor
    also preserves colimits. It follows that
    $\gr^*_\H\Inf^{\bE_3}_{\bF_p}$ is equivalent, as an $\bE_3$-algebra with a map from $\bF_p$, to
    the free graded $\bE_3$-algebra $\Fr^{\bE_3}_{\bF_p}(\bF_p)$ on $\bF_p(1)$ in the symmetric monoidal $\infty$-category
    $\Gr\LMod_{\bF_p}$.

    In~\cite{hill-lawson}, Hill and Lawson construct an $\bE_2$-filtration on the sphere spectrum whose
    associated graded is also equivalent to the free graded $\bE_3$-algebra
    $\Fr^{\bE_3}_{\bF_p}(\bF_p(1))$. They use the fact that, as an $\bE_2$-ring, $\bF_p$ is a
    pushout
    $$\xymatrix{
        \Fr^{\bE_2}(x)\ar[r]^{x\mapsto p}\ar[d]_{x\mapsto 0}&\bS\ar[d]\\
        \bS\ar[r]&\bF_p
    }$$
    by Hopkins--Mahowald, where $x$ denotes a degree $0$ generator. Hill and Lawson show that this implies that $\bF_p$ is also realized as
    a Bar construction $\bS\otimes_{\Fr^{\bE_3}(x)}\bS$ over the free $\bE_3$-algebra on $x$.
    They define their filtration by letting
    $$\F^\star_{\mathrm{HL}}\bS=\Fr^{\bE_3}(x_1)\otimes_{\Fr^{\bE_3}(x_0)}\bS,$$
    where $x_0$ maps to $\tau x_1$ on the left and to $p$ on the right. A priori, this tensor
    product is an $\bE_2$-ring. Nevertheless, since $p\in\bS$ lifts to
    $p\in\F^1_\H\Inf^{\bE_{\infty}}_{\bF_p}$, there is a commutative diagram
    $$\xymatrix{
        \Fr^{\bE_3}(x_0)\ar[r]\ar[d]&\ins^0\bS\ar[d]\\
        \Fr^{\bE_3}(x_1)\ar[r]&\F^\star_\H\Inf^{\bE_3}_{\bF_p}
    }$$
    of filtered $\bE_3$-algebras. This induces an $\bE_2$-monoidal map
    $\F^\star_{\mathrm{HL}}\bS\rightarrow\F^\star_\H\Inf^{\bE_3}_{\bF_p}$, for example
    by~\cite[Lem.~2.3]{heuts-land}.
    We claim this map is an equivalence. It is an equivalence upon applying $\F^0$ and both sides have equivalent associated graded rings, so it
    is enough to show that the induced map on $\gr^1$ is an equivalence or even non-zero.
    But, $p$ is detected in both $\gr^1_{\mathrm{HL}}\bS$ and $\gr^1\Inf_{\bF_p}^{\bE_3}$
    for degree reasons, which completes the proof.
\end{example}

\begin{remark}
    We do not know if $\bS_p\we\Infhat^{\bE_3}_{\bF_p}$, although $\bS_p$ is a retract of
    $\Infhat^{\bE_3}_{\bF_p}$ thanks to the natural map
    $\Infhat^{\bE_3}_{\bF_p}\rightarrow\Infhat^{\bE_\infty}_{\bF_p}\we\bS_p$. To study this question in detail would require some results
    for descent for $\bE_n$-cotangent complexes which is outside the scope of this article.
\end{remark}

We now discuss a version of the HKR filtration for $\THH$ of $\bE_n$-rings.

\begin{definition}[$\THH$ relative to infinitesimal cohomology]
    If $k$ is an $\bE_\infty$-ring and $R\in\Alg_{\bE_n}(\LMod_k)$, then
    $\F^\star_\H\Inf^{\bE_n}_{R/k}\rightarrow R$ can be viewed as a map of $\bE_n$-algebras in
    $\FMod_k$. It follows that we can view $R$ as an $\bE_n$-algebra in the $\bE_n$-monoidal category
    $\FMod_{\F^\star_\H\Inf^{\bE_n}_{R/k}}^{\bE_n}$. If $n\geq 2$, then we let
    $\F^\star_{\HKR}\THH^{\bE_n}(R/k)=\THH(R/\F^\star_\H\Inf^{\bE_n}_{R/k})$, computed in the $\bE_n$-monoidal
    category $\FMod_{\F^\star_\H\Inf^{\bE_n}_{R/k}}^{\bE_n}$. The resulting object
    $\F^\star_{\HKR}\THH^{\bE_n}(R/k)$ is an $\bE_{n-2}$-algebra in $\FMod_k$.
    That $\THH$ makes sense in the
    context of $\bE_2$-monoidal categories can be found in~\cite[Rem.~3.15]{krause-nikolaus}.
\end{definition}

\begin{example}
    We have that $\F^\star_{\HKR}\THH^{\bE_3}(\bF_p)\we\tau_{\geq 2\star}\THH(\bF_p)$ as
    $\bE_1$-rings. Indeed, $\F^\star_{\HKR}\THH^{\bE_3}(\bF_p)$ is obtained by applying the Bar construction
    twice to the augmentation $\F^\star_\H\Inf^{\bE_3}_{\bF_p}\rightarrow\bF_p$. 
    Thus, on associated graded pieces, the result is the free graded $\bE_1$-algebra under $\bF_p$ on
    $\L_{\bF_p}^{\bE_3}[1](1)\we\bF_p[2](1)$. As in Example~\ref{ex:hill_lawson}, since $\bF_p[2]$ is central, this is in fact
    equivalent to the free graded $\bE_1$-algebra in $\LMod_{\bF_p}$ on $\bF_p[2](1)$.
    We can Bar once more along the $\bE_1$-map $\THH^{\bE_3}_\fil(\bF_p)\rightarrow\bF_p$ to obtain
    an $\bE_0$-filtration $\F^\star_{\HKR}\THH^{(2),\bE_3}(\bF_p)$ on $\THH^{(2)}(\bF_p)$. On associated graded pieces
    it is precisely the free graded $\bE_0$-algebra in $\LMod_{\bF_p}$ on
    $\L_{\bF_p}^{\bE_3}[2](1)\we\bF_p[3](1)$.
\end{example}

\begin{remark}
    These constructions should have an interpretation in terms of factorization internal to
    the $\bE_n$-monoidal category $\FMod_{\F^\star_\H\Inf^{\bE_n}_{R/k}}^{\bE_n}$ as in
    Construction~\ref{const:higher_hkr}.
    It should be related to the $\bE_n$-homology theories studied in Fresse~\cite{fresse} and by
    Livernet--Richter~\cite{livernet-richter}.
    Such a theory is not in the literature at the moment and we do not explore it further.
\end{remark}

\section{A connection to Goodwillie calculus}\label{sec:goodwillie}

We thank John Francis for the comment that led to this section.
Filtrations on augmented $\Oscr$-algebras, Koszul duality, and Goodwillie calculus have a long,
intertwined history. See for
instance~\cite{ayala-francis-duality,ching-harper,francis-gaitsgory,gaitsgory-rozenblyum-2,harper-hess,kuhn_mccord,kuhn-pereira,richter-ziegenhagen}.
We will follow the survey in~\cite{brantner-mathew}.

For simplicity, we work in the setting of $\bE_\infty$-rings, but much works in much greater
generality. Let $k$ be a fixed $\bE_\infty$-ring and consider $\CAlg_k^\aug$, the $\infty$-category
of augmented $k$-algebras. Brantner and Mathew explain in~\cite{brantner-mathew} how to use the
comonad associated to the
adjunction $\mathrm{cot}\colon\CAlg_k^\aug\rightleftarrows\Mod_k\colon\mathrm{sqz}$ to develop a theory
of Lie algebras which controls deformation theories. Here, $\mathrm{cot}(R)$ is the cotangent fiber
$k\otimes_R\L_{R/k}$ while $\mathrm{sqz}$ is the square-zero extension functor.

To do this, and control finiteness properties, they introduce a filtered analogue. Let $\F^{\geq
1}\Mod_k=\Fun(\bZ_{\geq 1}^\op,\Mod_k)$ be the $\infty$-category of $\bZ_{\geq 1}$-indexed
decreasing filtrations. This is a non-unital symmetric monoidal category. One can consider it as a
full non-unital symmetric monoidal subcategory of $\F^+\Mod_k$ on those filtrations $\F^\star M$ where $\gr^0M\we 0$.
Let $\CAlg^{\mathrm{nu}}(\F^{\geq 1}\Mod_k)$ be the $\infty$-category of non-unital
$\bE_\infty$-rings in $\F^{\geq 1}\Mod_k$. Equivalently, this is the $\infty$-category of augmented
$\bE_\infty$-algebras $\F^\star R$ in $\F^+\Mod_k$ such that $k\we\gr^0R$.
Note that for such a filtered $\bE_\infty$-algebra, the augmentation is not extra data as there is
always a canonical map $\F^\star R\rightarrow\gr^0R$. Thus, we will write $\F^+\CAlg_k^\red$ for
this instantiation of $\CAlg^{\mathrm{nu}}(\F^{\geq 1}\Mod_k)$.
They consider a commutative diagram
$$\xymatrix{
    \Mod_k\ar[r]^{\Sym^{\mathrm{nu}}}\ar[d]^{\ins^1}&\CAlg_k^{\aug}\ar[r]^{\cot}\ar[d]^{\mathrm{adic}}&\Mod_k\ar[d]^{\ins^1}\\
    \F^{\geq 1}\Mod_k\ar[r]^{\Sym^{\mathrm{nu}}}&\F^+\CAlg_k^{\red}\ar[r]^{\cot}&\F^{\geq 1}\Mod_k
}$$
of left adjoint functors.

\begin{lemma}
    Given an augmented $\bE_\infty$-$k$-algebra $R$, $\mathrm{adic}(R)$ is equivalent to
    $\F^\star_\H\InfEoo_{k/R}$.
\end{lemma}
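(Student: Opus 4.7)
The plan is to identify $\F^\star_\H\InfEoo_{k/R}$ with $\mathrm{adic}(R)$ by matching universal properties. Since $\mathrm{adic}$ is by construction the left adjoint to the forgetful functor
\[
U\colon\F^+\CAlg_k^\red\to\CAlg_k^\aug,\qquad \F^\star S\mapsto(\F^0 S\to\gr^0 S= k),
\]
it suffices to exhibit $R\mapsto\F^\star_\H\InfEoo_{k/R}$ as another left adjoint to $U$ and invoke uniqueness of left adjoints.

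First I would verify that $\F^\star_\H\InfEoo_{k/R}$ naturally lies in $\F^+\CAlg_k^\red$. By Lemma~\ref{lem:graded}, $\gr^i_\H\InfEoo_{k/R}\simeq\Sym^i_k(\L^{\bE_\infty}_{k/R}[-1])$ vanishes for $i<0$ and equals $k$ for $i=0$, so the filtration is nonnegative and reduced. By the Poincar\'e lemma (Theorem~\ref{lem:intro_poincare}) applied to the augmentation $R\to k$, the weight-zero piece is $\F^0_\H\InfEoo_{k/R}\simeq\InfEoo_{k/R}\simeq R$, and the induced augmentation to $\gr^0= k$ recovers the original one by naturality. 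Hence $U(\F^\star_\H\InfEoo_{k/R})\simeq R$ in $\CAlg_k^\aug$.

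Next I would produce a natural equivalence
\[
\Map_{\F^+\CAlg_k^\red}(\F^\star_\H\InfEoo_{k/R},\F^\star S)\simeq\Map_{\CAlg_k^\aug}(R,\F^0 S)
\]
for each $\F^\star S\in\F^+\CAlg_k^\red$. The forward map is restriction to $\F^0$. For the reverse, a map $f\colon R\to\F^0 S$ in $\CAlg_k^\aug$ gives rise via $\ins^0 R\to\F^\star S$ to a filtered $R$-algebra structure on $\F^\star S$ compatible with the augmentation to $\ins^0 k$. The defining adjunction of $\F^\star_\H\InfEoo_{k/R}$ as the value at $k\in\CAlg_R$ of the left adjoint to $\gr^0\colon\FCAlg_R\to\CAlg_R$ then yields
\[
\Map_{\FCAlg_R}(\F^\star_\H\InfEoo_{k/R},\F^\star S)\simeq\Map_{\CAlg_R}(k,\gr^0 S)=\Map_{\CAlg_R}(k,k),
\]
which is contractible. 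Forgetting $R$-linearity produces the required filtered $k$-algebra map.

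The main obstacle is the bookkeeping between the two base categories ($\F^+\CAlg_k^\red$ versus $\FCAlg_R$): one must check that the mapping space on the left genuinely fibers over $\Map_{\CAlg_k^\aug}(R,\F^0 S)$ with contractible fibers. Concretely, any $k$-linear filtered map out of $\F^\star_\H\InfEoo_{k/R}$ is automatically $R$-linear with respect to the $R$-structure induced by its restriction to $\F^0$, since $R\simeq\F^0_\H\InfEoo_{k/R}$ generates the $R$-action on the source; this is the one substantive naturality check beyond a direct transcription of the universal properties.
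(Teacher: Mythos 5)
Your overall strategy --- exhibiting $R\mapsto\F^\star_\H\InfEoo_{k/R}$ as a left adjoint to the forgetful functor $U$ and invoking uniqueness of adjoints --- is exactly the ``diagram chase'' the paper has in mind: the paper first identifies the right adjoint of $\mathrm{adic}$ as $\F^\star S\mapsto(\F^0S\to\gr^0S)$ and then appeals to the adjunction $\F^\star_\H\InfEoo_{-/R}\dashv\gr^0$. Your preliminary steps (reducedness via Lemma~\ref{lem:graded}, $\F^0_\H\InfEoo_{k/R}\simeq\InfEoo_{k/R}\simeq R$ via the Poincar\'e lemma, and the identification of the fiber of restriction-to-$\F^0$ over $f$ with $\Map_{\FCAlg_R}(\F^\star_\H\InfEoo_{k/R},\F^\star S)$) are all fine.

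The gap is the final claim that $\Map_{\CAlg_R}(k,k)$ is contractible. It is not: writing $\epsilon\colon R\to k$ for the augmentation and using that $k$ is initial in $\CAlg_k$, one has
$$\Map_{\CAlg_R}(k,k)\simeq\Map_{\CAlg_k}(k,k)\times_{\Map_{\CAlg_k}(R,k)}\{\epsilon\}\simeq\Omega_\epsilon\Map_{\CAlg_k}(R,k).$$
For instance, for $R=\Sym_k(k[-1])$ augmented by killing the generator, this is $\Omega\,\Omega^\infty(k[1])\simeq\Omega^\infty k$, which is far from contractible. There is a correlated mismatch in what you fiber over: the fiber you compute, $\Map_{\FCAlg_R}(\F^\star_\H\InfEoo_{k/R},\F^\star S)$, is the fiber of restriction-to-$\F^0$ over a point $f$ of the \emph{unaugmented} space $\Map_{\CAlg_k}(R,\F^0S)$, whereas your target is the \emph{augmented} space $\Map_{\CAlg_k^\aug}(R,\F^0S)$; if those fibers really were contractible you would land on (components of) the former, not the latter. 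The two discrepancies cancel, and that cancellation is the actual content of the argument: the $R$-algebra structure on $\gr^0S\simeq k$ is via $p\circ f$ (the composite $R\to\F^0S\to\gr^0S$), not literally via $\epsilon$, so the universal property gives
$$\Map_{\FCAlg_R}(\F^\star_\H\InfEoo_{k/R},\F^\star S)\simeq\Map_{\CAlg_R}(k_\epsilon,k_{p\circ f})\simeq\{\epsilon\}\times_{\Map_{\CAlg_k}(R,k)}\{p\circ f\},$$
the space of homotopies $\epsilon\simeq p\circ f$, which is precisely the fiber of $\Map_{\CAlg_k^\aug}(R,\F^0S)\to\Map_{\CAlg_k}(R,\F^0S)$ over $f$. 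Matching these fibers naturally in $f$ completes the proof; contractibility is both false and not what is needed.
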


\begin{proof}
    The right adjoint of $\mathrm{adic}$ is the functor that takes a reduced $\bE_\infty$-ring
    $\F^\star S$ in
    $\F^+\Mod_k$ to the fiber of $\F^0S\rightarrow\gr^0S$. In other words it is the functor
    $\F^\star S\mapsto\F^1S$. Using that in general $\bE_\infty$-infinitesimal cohomology is the
    left adjoint of the $\gr^0$-functor, the claim is now a diagram chase.
\end{proof}

\begin{remark}
    The $\bE_\infty$-infinitesimal cohomology $\InfEoo_{k/R}$ is the cohomological analogue of the
    cotangent fiber. In fact, in the $\bE_\infty$-version of the theory of derived foliations as
    developed by To\"en--Vezzosi~\cite{tv-inf},
    $\F^\star_\H\InfEoo_{k/R}$ corresponds to the leaf of the final foliation on $\Spec R$ that
    passes through the point $x\in\Spec R$ corresponding to the augmentation.
    This connection is made much more explicit in work of
    Branter--Magidson--Nuiten~\cite{brantner-magidson-nuiten} and
    Fu~\cite{fu-inf}.
\end{remark}

\begin{lemma}\label{lem:goodwillie}
    The adic filtration is the Goodwillie tower. In particular, $\F^\star_\H\InfEoo_{k/R}$ is the
    Goodwillie tower of the forgetful functor $\CAlg_k^{\aug}\rightarrow\Mod_k$.
\end{lemma}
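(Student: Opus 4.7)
The plan is first to establish that the adic filtration functor is the Goodwillie tower (packaged as a functor with values in $\F^+\CAlg_k^{\red}$), and then to deduce the particular statement about $\F^\star_\H\InfEoo_{k/R}$ via the preceding lemma.

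Concretely, I would first unpack the Goodwillie tower of $U\colon\CAlg_k^{\aug}\to\Mod_k$. Evaluated at $R$, it is a tower $U(R)\to\cdots\to P_n U(R)\to P_{n-1}U(R)\to\cdots$ with $U(R)\we\lim_n P_n U(R)$, whose $n$-th layer $D_n U(R)$ is $n$-homogeneous in $R$. This tower naturally assembles into a functor $\CAlg_k^{\aug}\to\F^+\CAlg_k^{\red}$: each $P_n U(R)$ inherits a compatible $\bE_\infty$-structure from $R$, yielding an augmented filtered $\bE_\infty$-algebra with $\gr^0\we k$. Standard Goodwillie calculus identifies the graded pieces as $D_n U(R)\we\Sym^n_k(\mathrm{cot}(R))$.

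Next, I would appeal to the Brantner--Mathew characterization: the tower-valued functor $\CAlg_k^{\aug}\to\F^+\CAlg_k^{\red}$ is the left adjoint of the forgetful functor $\F^\star S\mapsto\F^0 S$, endowed with its canonical augmentation $\F^0 S\to\gr^0 S\we k$. The proof of this characterization uses the comonadic analysis of the filtered adjunction $\mathrm{cot}\dashv\mathrm{sqz}$ displayed in the commutative diagram preceding the lemma. Since $\mathrm{adic}$ is defined in the paper as precisely this same left adjoint, uniqueness of left adjoints yields the first sentence of the claim. The second sentence then follows from the preceding lemma, which identifies $\F^\star_\H\InfEoo_{k/R}\we\mathrm{adic}(R)$.

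The main obstacle is the Brantner--Mathew identification of the Goodwillie tower with the left adjoint $\mathrm{adic}$; once granted, the rest is formal. A concrete consistency check is the agreement of associated graded pieces: by Lemma~\ref{lem:graded} applied to the augmentation $R\to k$, together with the identification $\L_{k/R}^{\bE_\infty}[-1]\we\mathrm{cot}(R)$ coming from the cotangent cofiber sequence (using $\L_{k/k}^{\bE_\infty}\we 0$), one has $\gr^n_\H\InfEoo_{k/R}\we\Sym^n_k(\mathrm{cot}(R))$, which matches the Goodwillie layer $D_n U(R)$ computed above.
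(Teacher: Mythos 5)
Your proposal is essentially correct, and it ends up in the same place as the paper: the paper's entire proof is a citation (to Ayala--Francis, Thm.~2.1.6, or Harper--Hess, Rem.~1.14) for the identification of the adic tower with the Goodwillie tower, and your argument likewise defers all of the substantive content to an external "Brantner--Mathew characterization." The one place where your scaffolding is mildly circular is the middle step: describing the Goodwillie tower of $U$ as "the left adjoint of $\F^\star S\mapsto\F^0S$" is not an independent characterization you can appeal to and then close with uniqueness of left adjoints --- it is precisely the statement of the lemma, since $\mathrm{adic}$ is by definition that left adjoint. The Goodwillie tower is characterized by $n$-excisive approximation, and the actual content of the cited theorems is the comparison: one shows that each quotient $\F^0/\F^{n+1}$ of the adic filtration is an $n$-excisive functor of $R$ (its layers being homogeneous of degree $\leq n$) and that the natural map $U(R)\rightarrow\F^0/\F^{n+1}$ induces an equivalence on $n$-excisive approximations, whence $P_nU\we\F^0/\F^{n+1}$. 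Relatedly, your consistency check on associated graded pieces is correct (and the identification $\L^{\bE_\infty}_{k/R}[-1]\we\mathrm{cot}(R)$ via the transitivity sequence is right), but be careful not to lean on it as evidence of more than consistency: two convergent towers with equivalent layers need not agree without a comparison map, and here the comparison map is exactly what the cited results construct. The deduction of the second sentence from the preceding lemma is correct and is the same as the paper's.
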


\begin{proof}
    See~\cite[Thm.~2.1.6]{ayala-francis-duality} or~\cite[Rem.~1.14]{harper-hess}.
\end{proof}

\begin{remark}\label{rem:minasian2}
    It follows that our construction of $\F^\star_\HKR\THH(R)$ agrees with that of
    Minasian~\cite{minasian}. Indeed, by the claim on p.278 of~\cite{minasian}, Minasian's construction is precisely the adic filtration on the
    augmented $R$-algebra $R\rightarrow\THH(R)\rightarrow R$. We see that this is
    $\F^\star_\H\InfEoo_{R/\THH(R)}$.
\end{remark}

\begin{remark}[Comparison to Glasman]
    One final HKR-type filtration on $\THH$ we are aware of in the literature is due to Saul
    Glasman~\cite{glasman-hodge}. His filtration $\G^\star\THH(R)$ on $\THH(R)$ is multiplicative with
    $\gr^0_\G\THH(R)\we R$. It follows that there is a natural map
    $\F^\star_\HKR\THH(R)\we\F^\star_\H\InfEoo_{R/\THH(R)}\rightarrow\G^\star\THH(R)$.
    We do not at present compare these constructions. However, it should be related to
    the Koszul duality between the numerical (point counting) and Goodwillie--Weiss filtrations as
    considered in~\cite{ayala-francis-duality}.
\end{remark}

\small
\bibliographystyle{amsplain}
\bibliography{eooinf}
\addcontentsline{toc}{section}{References}

\medskip
\noindent
\textsc{Department of Mathematics, Northwestern University}\\
{\ttfamily antieau@northwestern.edu}

\end{document}